\documentclass[11pt]{article}

\usepackage[margin=1in]{geometry}
\usepackage{xcolor}
\usepackage{amsmath,amssymb,amsthm}
\usepackage{graphicx}
\usepackage{subcaption}
\usepackage{xr}
\usepackage{placeins}
\usepackage{url}
\usepackage[hidelinks]{hyperref}

\newtheorem{theorem}{Theorem}
\newtheorem{lemma}{Lemma}
\newtheorem{corollary}{Corollary}

\theoremstyle{definition}

\theoremstyle{remark}

\title{\textbf{Asymptotic Null Distributions of Moran's $I$ and Assortativity in Large Networks}\thanks{This is a preprint submitted for review at the Journal of Complex Networks and has not yet been peer-reviewed in its current form.}}

\author{
Karin Ait Braham$^{\dagger}$ \qquad
Louis-Paul Rivest \qquad
Thierry Duchesne\\[0.5em]
\small Department of Mathematics and Statistics, Université Laval\\
\small $^{\dagger}$Corresponding author:
\texttt{karin.ait-braham.1@ulaval.ca}
}

\date{}

\begin{document}

\maketitle

\begin{abstract}
This study investigates the asymptotic behavior of two dependence measures
defined on networks, Moran's $I$ statistic and Newman's assortativity,
{under the null hypothesis that a Gaussian node attribute $Y$
is independent of the network structure}.
We demonstrate that the structure of the network directly affects the
convergence rate to normality of these measures as the size of the network
increases. We further establish that, in some instances, the mean values of
these dependence measures under the null hypothesis remain non-negligible
asymptotically and must therefore be explicitly accounted for when calculating
the test statistics. Applications to a variety of simulated and real networks
also reveal that the normal approximation performs well only when the network
is not strongly heterogeneous.
{Network topology determines both the convergence rate to
normality and whether the limiting distribution is Gaussian. In dense networks
whose degree heterogeneity does not vanish, we further show that assortativity
can fail to be a valid test statistic even though Moran's $I$ remains well
behaved, whereas a dominating node invalidates both.}
\end{abstract}

\medskip
\noindent\textbf{Keywords:}
Assortativity; Moran's coefficient; asymptotic normality; convergence rate;
regular network; dense network; network simulation.

\section{Introduction}
{In network data analysis, one question remains unanswered: how can we distinguish what truly arises from the structure of the network from what is merely random fluctuation?}	
Network science has emerged as a major interdisciplinary field drawing on sociology, computer science, statistical physics, and economics. From the early sociograms of \cite{moreno1934shall} to the foundational probabilistic modeling work of \cite{erdHos1960evolution}, numerous concepts have been developed to better understand network structure and dynamics. The centrality measures introduced by \cite{freeman1979centrality}, the work of \cite{milgram1967six} on social proximity, and more recent contributions by \cite{newman2002,newman2003,newman2012communities} on complex and bipartite networks have significantly enriched the field. The rise of online networks \cite{mislove2007measurement} and the development of modern statistical methods for covariance estimation on graphs (\cite{chen2018estimating}; \cite{lan2018covariance}) further illustrate the growing importance of statistical tools in structural network analysis.\\

	Among the many metrics introduced to quantify dependence in a network, assortativity (\cite{newman2002,newman2003}) plays a central role. As a second-order statistic, it evaluates the tendency of connected nodes to share similar values of the attribute studied $Y$, offering a more nuanced interpretation of local interactions. {Despite its widespread use, the asymptotic properties of assortativity measures remain less understood than those of Moran's $I$, particularly in the context of large heterogeneous networks.}\\
    
	This issue naturally parallels questions encountered in spatial statistics, where one seeks to detect dependence between observations according to their geographic proximity. Moran’s I coefficient $I$ (\cite{moran1950notes}) is the classical tool used to measure spatial correlation. This statistic was generalized by \cite{cliff_ord1973,cliff_ord1981}, who formalized spatial interaction models and established asymptotic properties. \cite{sen1976large} proved the asymptotic normality of Moran's $I$ statistic for non-normal data under finite fourth moments.
	Spatial econometrics later deepened this analysis by incorporating neighborhood effects directly within explanatory variables and disturbance terms \cite{anselin1988spatial}. In such models, Moran's $I$ statistic  remains a standard diagnostic tool, whether or not an autoregressive component is present (\cite{lin2009properties} and \cite{kelejian_prucha2001}). All these contributions establish asymptotic normality with the classical $\sqrt{n}$ convergence rate.\\
    
    {
    The link between spatial autocorrelation  and network dependence has been increasingly recognized in recent years. For instance, \cite{lee2020testing} emphasized that classical tools for detecting spatial autocorrelation can naturally be adapted to network data and proposed testing procedures applicable in both settings. These developments suggest that statistics such as Moran's $I$ and Newman's assortativity may be studied within a common inferential framework. 
   Network topology, as measured by eigenvalues of the adjacency matrix,  may substantially affect the asymptotic behavior of these statistics. In particular, when  individual eigenvalues become negligible relative to the overall spectral mass of the network, standardized versions of Moran's $I$ and assortativity are asymptotically normal. Conversely, for highly structured networks, such as star networks or community networks with a fixed number of large communities, a finite number of eigenvalues remain asymptotically non-negligible. In such settings, the limiting distributions are no longer Gaussian. Examples illustrating both regimes are discussed in Section~5.}
   \\
   
   { A Gaussian distribution for $Y$ is adopted throughout this paper because the theoretical analysis relies on spectral representations of quadratic forms in Gaussian random vectors. Although this framework covers many applications involving continuous and approximately normal responses that are approximately symmetric and light-tailed, possibly after a transformation, extending the results to non-Gaussian settings would be of considerable interest. One could possibly build on  
  \cite{sen1976large} who shows that normality can be replaced by the existence of a fourth moment, when the network has uniformly bounded degrees. 
  Throughout this paper, the adjacency matrix is treated as fixed.
  It could be the realization of an underlying stochastic mechanism, such as the  $\beta$-model \cite{yan2013central}, the $p_0$ model \cite{yan2021directed,yan2016asymptotics}, and more recent covariate-adjusted network models \cite{qu2026inference,yan2019statistical}. The proposed test statistic is conditional on the observed network: the null distribution of the response vector does not depend on the network structure, so that any apparent association between connected nodes is attributable to random variations in $Y$.}\\
  
      {This work investigates the asymptotic distribution of the assortativity coefficient and of Moran's statistic when the proximity between units is determined by the adjacency matrix of a fixed network. Our main results characterize network structures for which asymptotic normality holds  and provide examples where the limiting distribution is chi-squared.}
    The analysis is based on Lindeberg’s central limit theorem \cite[Sec.~27, p.~310, Th.~27.2]{billingsley2017}. It provides a solid theoretical framework for comparing the performance of these statistics and understanding in which structural settings they can detect an association between the network and the variability of the random variable~$Y$.
    {Testing the independence of a network and a node attribute is the primary question
in several applications, among them the insurance networks of Section~5.3.2, where a continuous
measure of client involvement is available and the question is whether it is associated to the clients networks.}\\
    
    \section{Presentation of the two statistics}	
	In this section, we present two statistics used to test whether the structure of a network has an effect on the values of a normal random variable $Y$: the assortativity coefficient $r_n$ and Moran’s coefficient $I_n$. We then derive their expressions for several classical families of networks in order to highlight the influence of network topology.
	\subsection{Notation}	
	We consider a simple   unweighted network with $n$ nodes.  Its adjacency matrix  $A_n=\{a_{ij}: \ 1\le i,j\le n\}$ is a symmetric matrix satisfying $a_{ii}=0, \ i=1,\ldots,n$, and $a_{ij}\in \{0,1\}$ for all $i\ne j$, with $a_{ij}=1$ if, and only if, nodes $i$ and $j$ are connected. The following notation is used:
	\begin{itemize}
		\item 
        $\mathbf{I}_n$ is the $n\times n$ identity matrix;
		\item 
        $\mathbf{1}_n$ is the $n$-dimensional vector of ones;
		\item 
        $\mathbf{d}_n = A_n \mathbf{1}_n = (d_{n1},\ldots,d_{nn})^{\top}$ is the vector of degrees,
		with $d_{ni}$ the degree of node $i$;
		\item 
        $D_n=\operatorname{diag}(d_{n1},\ldots,d_{nn})$: the diagonal matrix of degrees;
		\item 
        $m_n$: the total number of edges, so that $\sum_{i=1}^n d_{ni} = 2m_n$;
        \item
 
        $\bar d_n = \dfrac{2m_n}{n}$ is the average degree;
		\item 
        $s_{nd}^2 = \dfrac{1}{n}\sum_{i=1}^n (d_{ni}-\bar{d}_n)^2$ is the variance of the degrees;
		\item 
        $CV_{nd} = \dfrac{s_{nd}}{\bar d_n}$ is the coefficient of variation of the degrees;
		\item $\mathbf{Y}_n=(Y_1,\ldots,Y_n)^{\top}$: the observations of the variable $Y$;
		\item 
        $\bar Y_n ={n}^{-1}\sum_{i=1}^n Y_i$ and $\bar Y_d
	= \sum_{i=1}^n d_{ni} Y_i/(\sum_{i=1}^n d_{ni})$ are  the unweighted and degree weighted empirical means;
		\item 
        $M_n = \mathbf{I}_n - \frac{1}{n}\mathbf{1}_n\mathbf{1}_n^{\top}$: the centering matrix.
	\end{itemize}
A network is said to be regular if all its degrees are equal: $d_{ni}=\bar d_n$ for $i=1,\ldots,n$.
\\
 
{The objective is to assess whether the network structure is associated with the
variability of the node attributes $\mathbf Y_n$. The null
hypothesis is that $Y_1,\ldots,Y_n$ are independent and identically distributed as
$N(\mu,\sigma^2)$, so that their joint distribution, a $N_n(\mu\mathbf{1}_n, \sigma^2 \mathbf{I}_n)$, does not depend on $A_n$. Under the
alternative, the $Y_i$ remain marginally $N(\mu,\sigma^2)$ but are no longer
independent, their dependence being induced by the network structure encoded in
$A_n$. To make this precise, define
\begin{equation}\label{eq:Qn-def}
Q_n=(\mathbf Y_n-\mu\mathbf 1_n)^\top A_n(\mathbf Y_n-\mu\mathbf 1_n).
\end{equation}
Since $a_{nii}=0$ for all $i$,
$$E(Q_n)=2\sum_{i<j}a_{nij}\operatorname{Cov}(Y_i,Y_j),$$
which is zero under $H_0$ (independence) and generally nonzero under $H_1$
(network-induced dependence). The two hypotheses of interest are
\begin{equation}\label{eq:hypotheses}
H_0:E(Q_n)=0 \qquad\text{against}\qquad
H_1:E(Q_n)>0 \ \text{ or }\ H_1:E(Q_n)<0.
\end{equation}
Two test statistics for $H_0$, the assortativity and Moran's $I$, are now introduced. }



	\subsection{Assortativity Coefficient $r_n$}
	The assortativity coefficient, introduced by \cite{newman2002,newman2003}, measures the tendency of connected nodes to share similar values of the variable studied $Y$. In our setting, it
	is defined as
	\begin{equation}\label{eq03}
		r_n
		= \frac{\displaystyle \sum_{i=1}^{n}\sum_{j=1}^{n} a_{ij}\,(Y_i-\bar Y_d)(Y_j-\bar Y_d)}
		{\displaystyle \sum_{i=1}^{n} d_{ni}\,(Y_i-\bar Y_d)^2},
	\end{equation}
	and $\bar Y_d$ is the degree weighted mean.  This statistic is normalized and takes values in $[-1,1]$.	
	\subsection{Moran's Coefficient $I_n$}	
	Moran's coefficient \cite{moran1950notes} is a classical tool for quantifying a spatial correlation among observations associated with connected units. When the proximity between units is defined through the adjacency matrix $A_n$, Moran's coefficient is given by
	\begin{equation}\label{eq004}
		I_n
		= \frac{n}{2m_n}\,
		\frac{\displaystyle \sum_{i=1}^{n}\sum_{\substack{j=1\\ j\neq i}}^{n}
			a_{ij}\,(Y_i-\bar Y_n)(Y_j-\bar Y_n)}
		{\displaystyle \sum_{i=1}^{n}(Y_i-\bar Y_n)^2}.
	\end{equation}
	Unlike $r_n$, the statistic $I_n$ can be larger than 1. In spatial statistics, the definition of $I_n$ involves a weight matrix 
whose entries are not necessarily 0 or 1. {The null hypothesis $H_0:\mathbb E(Q_n)=0$ is rejected when $r_n$ or $I_n$ is either large positive or large negative.}\\	

For subsequent asymptotic developments, we rewrite $r_n$ and $I_n$ as ratios of quadratic forms,
 \begin{equation}\label{eq:ratio}
 r_n = \frac{S_{A,n}}{S_{D,n}} \mbox{~~and} ~~I_n = \frac{S_{AM,n}}{S_{M,n}},
\end{equation}
{
where the numerator and the denominator for $r_n$ and $I_n$ are given by
\begin{align*}
	S_{A,n}
	&= \frac{1}{2m_n}\,\mathbf{Y}_{n}^{\top}\!\left( A_{n}
	-\frac{1}{2m_{n}}\,\mathbf{d}_{n}\mathbf{d}_{n}^{\top} \right)\!\mathbf{Y}_{n},& 
    S_{D,n}
	& = \frac{1}{2m_n}\,\mathbf{Y}_{n}^{\top}\!\left( D_{n}
	-\frac{1}{2m_{n}}\,\mathbf{d}_{n}\mathbf{d}_{n}^{\top} \right)\!\mathbf{Y}_{n} \\
	S_{AM,n}
	&= \frac{1}{2m_n}\mathbf Y_n^\top M_n A_n M_n\mathbf Y_n,& 
    S_{M,n}
	& = \frac{1}{n}\,\mathbf{Y}_{n}^{\top} M_n\mathbf{Y}_{n}.
	\end{align*}
These quadratic forms are obtained by noting that the numerator of $r_n$ is equal to   
\[
S_{AM,n}=\sum_{i,j=1}^{n}a_{ij}(Y_i-\bar Y_d)(Y_j-\bar Y_d)/(2m_n)
=
\{\sum_{i,j=1}^{n}a_{ij}Y_iY_j\}/(2m_n)-
\{\sum_{i=1}^{n}d_{in}Y_i\}^2/(2m_n)^2,
\]
while that for $I_n$ is 
\[
S_{AM,n}=\sum_{i,j=1}^{n}a_{ij}(Y_i-\bar Y_n)(Y_j-\bar Y_n)/(2m_n)
=
(M_n\mathbf Y_n)^\top A_n(M_n\mathbf Y_n)/(2m_n).
\]
The quadratic forms for the denominators are obtained in a similar way.}
Observe that for a regular network $r_n=I_n$.	
\section{Some closed form expressions for $r_n$ and $I_n$}\label{sec:example}
This section presents some simple networks for which $I_n$ and $r_n$ have simple explicit expressions that will serve as test cases for our asymptotic results.
	\subsection{Star Network}\label{sec:star}
	\begin{figure}[!h]
	    \centering\includegraphics{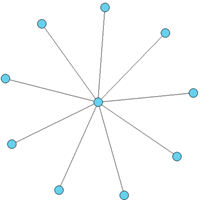}
	    \caption{Star network with $n=10$ vertices.}
	    \label{fig:star}
	\end{figure}	
	Consider a star network with $n=10$ nodes given in Figure \ref{fig:star}.  It consists of a central node, numbered 1, connected to the $n-1$ peripheral nodes that are numbered 2 to $n$. Moran's statistic is easily shown to be equal to
	\begin{equation}\label{eq:In-star}
		I_n
		=
		-\frac{n}{n-1}
		\frac{(Y_1-\bar{Y}_n)^{2}}
		{\sum_{i=1}^n (Y_i-\bar{Y}_n)^{2}},
	\end{equation}
	 while 
     {
     \begin{equation}\label{eq:rn-star}
         r_n = -\frac{n^{2}(Y_1-\bar{Y}_n)^{2}}
         {2(n-1)\sum_{i=1}^{n}(Y_i - \bar{Y}_n)^2 
         + n(n-2)(Y_1 - \bar{Y}_n)^2}.
     \end{equation}
     }
     Observe that \eqref{eq:In-star} is proportional to the square of the z-score used  to test whether $Y_1$ is an outlier in a normal sample, {see \cite[sect.~6.3]{barnett1994outliers} for further discussion.  }   
	\subsection{Complete Network} \label{sec:comp}
	\begin{figure}[!h]
	    \centering\includegraphics{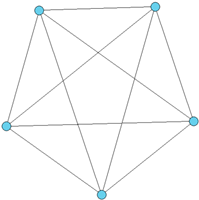}
	    \caption{Complete network with $n=5$ vertices.}
	    \label{fig:comp}
	\end{figure}	
	Figure \ref{fig:comp} shows a complete network on $n$ nodes. This is a regular network where each node
	has degree $n-1$ and $A_n = \mathbf{1}_n\mathbf{1}_n^{\top} - I_n$ and $2m_n = n(n-1)$.
	It follows immediately that $I_n = r_n = -\,\frac{1}{n-1}$.\\
	Thus, in a  fully connected network, both statistics are equal to $-\frac{1}{n-1}$ with probability $1$.	
	\subsection{Path Network}\label{sec:pat}
	\begin{figure}[!h]
	    \centering\includegraphics{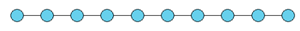}
	    \caption{Path network with $n=10$ vertices.}
	    \label{fig:path}
	\end{figure}	
	Consider the adjacency matrix $A_n$ corresponding to a path with edges $(1,2),\dots,(n-1,n)$, as shown in Figure~ \ref{fig:path}.  In this case,
	\[
	\sum_{\substack{i,j=1 \\ i\neq j}}^{n}
	a_{ij}\,(Y_i-\bar Y_n)(Y_j-\bar Y_n)
	=
	2\sum_{i=1}^{n-1}(Y_i- \bar{Y}_n)(Y_{i+1}-\bar{Y}_n).
	\]
	Thus,
	\[
	I_n =
	\frac{n}{n-1}\,
	\frac{\displaystyle \sum_{i=1}^{n-1}
		(Y_i- \bar{Y}_n)(Y_{i+1}-\bar{Y}_n)}
	{\displaystyle \sum_{i=1}^{n} (Y_i-\bar{Y}_n)^2 },
	\]
	is proportional to the empirical auto-correlation of order 1 for the time series $\{Y_i:\ i=1,\ldots,n\}$.
	\subsection{Community Network}\label{sec:com}
    Consider a network composed of $k$ disjoint communities, each of size $\ell_n=n/k$, presented in Figure \ref{fig:clus}. Each community
	 is a complete network of size $\ell_n$ as defined in Section \ref{sec:comp}. This is a regular network where  $r_n=I_n$ can be rewritten using an ANOVA-type decomposition.
	\begin{figure}[!h]
	    \centering\includegraphics{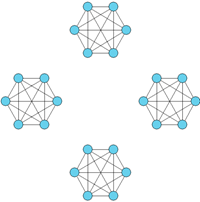}
	    \caption{Four-cluster network with four disjoint communities of size 6 $(n = 24)$.}
	    \label{fig:clus}
	\end{figure}	
    It is convenient to let the index $i=1,\ldots,k$ denote a community and $j=1,\ldots,\ell_n$ denote a unit within a community. { The observations $Y_1,\ldots,Y_n$ are re-indexed accordingly, with $Y_{ij}$ denoting the response observed for unit $j$ in community $i$. The corresponding community and overall means are
\[
\bar Y_i=\frac{1}{\ell_n}\sum_{j=1}^{\ell_n}Y_{ij},
\qquad
\bar Y_n=\frac{1}{n}\sum_{i=1}^{k}\sum_{j=1}^{\ell_n}Y_{ij}.
\]}
	The standard sums of squares of the one way ANOVA are
	\[
	\mathrm{SST}
	= \sum_{i,j} (Y_{ij} - \bar Y_n)^2,
	\qquad
	\mathrm{SSB}
	= \sum_{i=1}^k \ell_n (\bar Y_i - \bar Y_n)^2,
	\qquad
	\mathrm{SSW}
	= \sum_{i,j} (Y_{ij} - {\bar{Y}_i})^2,
	\]
	with the identity $\mathrm{SST} = \mathrm{SSB} + \mathrm{SSW}$.
	For this network
    \begin{eqnarray*}
(\mathbf{Y}_n-\bar Y_n\mathbf{1}_n)^\top A_n (\mathbf{Y}_n-\bar Y_n \mathbf{1}_n)&=&
	\sum_{i}
	\left[
	\left\{ \sum_{j}(Y_{ij}-\bar Y_n) \right\}^2
	-
	\sum_{j}(Y_{ij}-\bar Y_n)^2
	\right] \\
    &=&
	\ell_n\ \mathrm{SSB}
	- \mathrm{SST}.
    \end{eqnarray*}	
		Therefore,
\[
	I_n
	= \frac{n}{k\ell_n(\ell_n-1)}\frac{(\mathbf{Y}_n-\bar Y_n\mathbf{1}_n)^\top A_n (\mathbf{Y}_n-\bar Y_n\mathbf{1}_n)}{\mathrm{SST}}= \frac{\ell_n R^2 - 1} {\ell_n-1},
	\]
where $R^2$ is  the ANOVA coefficient of determination
	$R^2 = \mathrm{SSB}/\mathrm{SST}$. Thus, for this network, the size of $I_n=r_n$ depends on the differences between the means of the community of $Y$. Indeed, one can show that when $n$ is large, $I_n$ is equal to the adjusted $R^2$ for the one way ANOVA of $Y$ as a function of the factor community. This helps in the interpretation of $I_n$. 	
	\section{Asymptotic Normality of the Statistics $r_n$ and $I_n$}\label{sec:normalite}	
This section establishes the asymptotic normality of $r_n$ and $I_n$  by finding the limits of
	 the quadratic forms $S_{A,n}$, $S_{D,n}$, $S_{AM,n}$, and $S_{M,n}$ introduced in equation \eqref{eq:ratio}.
 We prove that the normalized denominators $S_{D,n}/\sigma^2$ and $S_{M,n}/\sigma^2$ converge to 1 in probability, while we use a central limit theorem to prove the convergence in distribution of the numerator $S_{A,n}/\sigma^2$ and $S_{M,n}/\sigma^2$.\\
The convergence in probability of the denominators is established in the next lemma, which is proved using a Weak Law of Large Number.
	\begin{lemma}[Convergence in probability of the denominators]\label{lem:denom}
		Let $\mathbf{Y}_{n}$ be a random vector with a $N_n(\mu\mathbf{1}_n,\sigma^{2}\mathbf{I}_n)$ distribution. The normalized denominator of $I_n$, $S_{M,n}/\sigma^2$, converges to 1 in probability, while a sufficient condition for the convergence of $S_{D,n}/\sigma^2$
to 1 in probability is that
 \begin{equation}\label{eq:cv-cond}
			\frac{CV_{nd}^{2}}{n}\xrightarrow[n\to\infty]{}0.
		\end{equation}	    
	\end{lemma}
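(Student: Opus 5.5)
Both denominators are Gaussian quadratic forms, so the plan is to compute the mean and variance of each one and close with Chebyshev's inequality, which is the Weak Law of Large Numbers in its simplest form. Both forms annihilate the mean vector: $M_n\mathbf 1_n=0$, and $\bigl(D_n-\mathbf d_n\mathbf d_n^\top/(2m_n)\bigr)\mathbf 1_n=\mathbf d_n-\mathbf d_n=0$. Hence we may take $\mu=0$ and $\sigma^2=1$ without loss of generality and write $\mathbf Y_n=\mathbf Z$ with $\mathbf Z\sim N_n(0,\mathbf I_n)$. For a symmetric matrix $B$ we use the standard facts $E(\mathbf Z^\top B\mathbf Z)=\operatorname{tr}B$ and $\operatorname{Var}(\mathbf Z^\top B\mathbf Z)=2\operatorname{tr}(B^2)$.

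\emph{Moran's denominator.} Here $S_{M,n}=\mathbf Z^\top M_n\mathbf Z/n$, and $M_n$ is idempotent of rank $n-1$. Therefore $nS_{M,n}\sim\chi^2_{n-1}$, so $E S_{M,n}=(n-1)/n\to1$ and $\operatorname{Var}S_{M,n}=2(n-1)/n^2\to0$. Chebyshev's inequality then gives $S_{M,n}\to1$ in probability.

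\emph{Assortativity denominator.} Let $B_n=\bigl(D_n-\mathbf d_n\mathbf d_n^\top/(2m_n)\bigr)/(2m_n)$.
\begin{itemize}
\item The mean is $\operatorname{tr}B_n=1-\sum_i d_{ni}^2/(2m_n)^2$.
\item Since $\sum_i d_{ni}^2=n(s_{nd}^2+\bar d_n^2)$ and $2m_n=n\bar d_n$, this becomes $\sum_i d_{ni}^2/(2m_n)^2=(1+CV_{nd}^2)/n$.
\item Hence $\operatorname{tr}B_n=1-(1+CV_{nd}^2)/n\to1$ exactly under condition \eqref{eq:cv-cond}.
\end{itemize}
For the variance, the matrix $D_n-\mathbf d_n\mathbf d_n^\top/(2m_n)$ is positive semidefinite. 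Indeed, its form equals $\sum_i d_{ni}(Y_i-\bar Y_d)^2$, so $B_n\succeq0$ and $\operatorname{tr}(B_n^2)\le\operatorname{tr}(D_n^2)/(2m_n)^2$. The right-hand side is again $\sum_i d_{ni}^2/(2m_n)^2=(1+CV_{nd}^2)/n\to0$. If one prefers not to use semidefiniteness, expanding $\operatorname{tr}(B_n^2)$ directly gives
\[
\Bigl[\sum d_{ni}^2-2\sum d_{ni}^3/(2m_n)+\bigl(\sum d_{ni}^2\bigr)^2/(2m_n)^2\Bigr]/(2m_n)^2 .
\]
The middle term is nonpositive, and the last is the square of a quantity tending to zero. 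Chebyshev's inequality then yields $S_{D,n}\to1$ in probability.

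There is no serious obstacle. The only step needing care is the identity $\sum_i d_{ni}^2/(2m_n)^2=(1+CV_{nd}^2)/n$, because it shows that \eqref{eq:cv-cond} controls the mean and the variance simultaneously.
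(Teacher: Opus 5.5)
Your argument is correct and is essentially the route the paper indicates. The paper only says the lemma follows from a weak law of large numbers, and your mean--variance computation with Chebyshev's inequality, driven by the identity $\sum_i d_{ni}^2/(2m_n)^2=(1+CV_{nd}^2)/n$, is that argument written out in full. (One small point: the bound $\operatorname{tr}(B_n^2)\le\operatorname{tr}(D_n^2)/(2m_n)^2$ also needs $B_n\preceq D_n/(2m_n)$, not just $B_n\succeq 0$, but your direct trace expansion already settles the variance.)
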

	Condition \eqref{eq:cv-cond} of "no dominating node" is needed to establish that $S_{D,n}/\sigma^2$ converges to 1 in probability. 
\medskip
	To demonstrate the asymptotic normality of the numerators $S_{A,n}$ and $S_{AM,n}$ we apply Lindeberg’s central limit theorem that is now stated.
    
	\begin{theorem}[Lindeberg]\label{thm3}
		Let $(\Omega_{n}, \mathcal{F}_{n},\mathbb{P})$ be a probability space for each $n$, and let
		$X_{n1},\ldots, X_{nN_{n}}$ be independent random variables on this space, with
		$\mathbb{E}(X_{ni})=\mu_{ni}$ and $\mathrm{Var}(X_{ni})=\sigma_{ni}^{2}\in(0,\infty)$. Define
		$s_{n}^{2}=\sum_{i=1}^{N_n}\sigma_{ni}^{2}$. If, for every $\epsilon>0$,
		\begin{equation}\label{condLind}
			\lim_{n\to\infty}
			\frac{1}{s_{n}^{2}}\sum_{i=1}^{N_n}
			\mathbb{E}\left[(X_{ni}-\mu_{ni})^{2}
			\mathbf{1}_{\{|X_{ni}-\mu_{ni}|>\epsilon s_{n}\}}
			\right] =0,
		\end{equation}
		then
		\[
		\frac{\sum_{i=1}^{N_n}(X_{ni}-\mu_{ni})}{s_{n}}
		\xrightarrow[n\to\infty]{\mathcal{L}} \mathcal{N}(0,1).
		\]	    
	\end{theorem}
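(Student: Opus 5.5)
The statement is the classical Lindeberg central limit theorem, quoted from \cite{billingsley2017}. I would prove it by characteristic functions and then invoke L\'evy's continuity theorem.

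\emph{Reductions.} Replacing $X_{ni}$ by $(X_{ni}-\mu_{ni})/s_n$, I assume without loss of generality that $\mu_{ni}=0$ and $s_n^2=\sum_i\sigma_{ni}^2=1$. Condition \eqref{condLind} then reads $L_n(\epsilon)=\sum_i \mathbb{E}[X_{ni}^2\mathbf{1}_{\{|X_{ni}|>\epsilon\}}]\to0$ for every $\epsilon>0$.

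\emph{Consequence of the Lindeberg condition.} No single summand carries non-vanishing variance. For each $i$,
\[
\sigma_{ni}^2\le \epsilon^2+\mathbb{E}[X_{ni}^2\mathbf{1}_{\{|X_{ni}|>\epsilon\}}]\le \epsilon^2+L_n(\epsilon),
\]
so $\max_i\sigma_{ni}^2\to0$.

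\emph{Step 1: Taylor comparison.} Let $\varphi_{ni}$ be the characteristic function of $X_{ni}$. The Taylor inequality $|e^{ix}-1-ix+x^2/2|\le\min(|x|^3/6,\,x^2)$ gives
\[
\Bigl|\varphi_{ni}(t)-\bigl(1-\tfrac{t^2\sigma_{ni}^2}{2}\bigr)\Bigr|\le \mathbb{E}\bigl[\min(|tX_{ni}|^3/6,\;t^2X_{ni}^2)\bigr].
\]
I split the expectation on $\{|X_{ni}|\le\epsilon\}$ and its complement and sum over $i$, using $\sum_i\sigma_{ni}^2=1$. The total error is at most $\epsilon|t|^3/6+t^2L_n(\epsilon)$. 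Letting $n\to\infty$ and then $\epsilon\downarrow0$ shows the total error tends to $0$.

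\emph{Step 2: compare products.} I use the elementary fact that for complex numbers $z_i,w_i$ of modulus at most $1$, $|\prod z_i-\prod w_i|\le\sum|z_i-w_i|$.
\begin{itemize}
\item I first apply it with $z_i=\varphi_{ni}(t)$ and $w_i=1-t^2\sigma_{ni}^2/2$. These $w_i$ lie in $[0,1]$ once $\max_i\sigma_{ni}^2\,t^2\le 2$, which holds for large $n$ by the preliminary consequence above.
\item I then apply it with $w_i$ and $e^{-t^2\sigma_{ni}^2/2}$. Using $|e^{-x}-1+x|\le x^2$ for $x\ge0$, this difference is bounded by
\[
\tfrac{t^4}{4}\sum_i\sigma_{ni}^4\le\tfrac{t^4}{4}\max_i\sigma_{ni}^2\to0.
\]
\end{itemize}
Since $\prod_i e^{-t^2\sigma_{ni}^2/2}=e^{-t^2/2}$, independence gives $\mathbb{E}\,e^{it\sum_iX_{ni}}=\prod_i\varphi_{ni}(t)\to e^{-t^2/2}$ for every $t$. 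L\'evy's continuity theorem then yields $\sum_iX_{ni}\xrightarrow{\mathcal L}\mathcal N(0,1)$.

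\emph{Main obstacle.} The delicate point is Step 1: the sharp two-sided Taylor bound, combined with the truncation at level $\epsilon$, is exactly where the Lindeberg condition enters. A secondary care point is the derivation that $\max_i\sigma_{ni}^2\to0$, which is what justifies the product comparison in Step 2. In the paper this theorem is simply cited rather than reproved, and the real work will be verifying \eqref{condLind} for the spectral decomposition of the quadratic numerators.
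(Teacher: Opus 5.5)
Your proposal is correct and follows essentially the same route as the source: the paper gives no proof of its own and cites Theorem~27.2 of \cite{billingsley2017}, whose proof is the characteristic-function argument you reproduce. That argument consists of the normalization $s_n^2=1$, the Taylor bound $\min(|x|^3/6,x^2)$ truncated at $\epsilon$, the deduction $\max_i\sigma_{ni}^2\le\epsilon^2+L_n(\epsilon)\to0$, the product-comparison lemma for complex numbers of modulus at most $1$, and L\'evy's continuity theorem.
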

The next theorem determines the asymptotic distributions of $r_n$ and $I_n$. Its proof, together with that of Corollary \ref{cor:sparse_regime-asym} is given in Section \ref{sec:proof}.
	\begin{theorem}[Asymptotic normality of $r_n$ and $I_n$]\label{thm1}
		Let $\mathbf{Y}_{n}$ be a vector of $n$ i.i.d.\ variables with distribution $N(\mu,\sigma^{2})$ and define the asymptotic means of $r_n$
and $I_n$ by
		\begin{align*}
			 \mu(r_n)&= -\frac{1}{n}\,\bigl(CV_{nd}^{2}+1\bigr), & \mu(I_n)&= -\frac{1}{n},
\end{align*}
and their asymptotic variances by
\begin{align*}
			 \sigma^2(r_n)&= \frac{2}{n^{2}}
			\left[\frac{n}{\bar d_{n}}+
			CV_{nd}^{4}-1-2CV_{nd}^{2}I_{nd}-2CV_{nd}^{2}
			\right],\\
		 \sigma^2(I_n)&=\frac{2}{n^{2}}
			\left[\frac{n}{\bar d_{n}}
			-1-2CV_{nd}^{2}
			\right],
		\end{align*}
where $I_{nd}$ is Moran's $I$ statistic calculated on the vector of degrees, $\mathbf{d}_n$.\\
Assume that {
$n\sigma(I_n)$ is bounded away from 0 as $n$ becomes large and
}
		\begin{equation}\label{eq:spectral-cond}
			\lambda_{n,\max}^*=\frac{\lambda_{n,\max}}{\sqrt{\sum_{i=1}^{n}\lambda_{ni}^{2}}}
			\xrightarrow[n\to\infty]{}0,
		\end{equation}
		where $\lambda_{ni}$, $i=1,\dots,n$, are the eigenvalues of the matrix $B_n=M_{n}A_{n}M_{n}$ and
		$\lambda_{n,\max}=\max_{1\le i\le n}|\lambda_{ni}|$, then
		\begin{equation}\label{eq:clt-rn}
			\frac{I_{n}-\mu(I_{n})}{\sigma(I_{n})}
			\xrightarrow[n\to\infty]{\mathcal{L}} \mathcal{N}(0,1).
		\end{equation}
 Result \eqref{eq:clt-rn} also holds for $r_n$ if we replace $B_n$ by $A_{n} - \,\mathbf{d}_{n}\mathbf{d}_{n}^{\top}/(2m_{n})$ and $\mu(I_n),\sigma(I_n)$ by $\mu(r_n),\sigma(r_n)$ provided that  $n\sigma(r_n)$ is bounded away from 0 as $n$ becomes {large}.
	\end{theorem}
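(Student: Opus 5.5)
The plan is to reduce both statistics to a centred quadratic form in independent Gaussians and then apply Theorem~\ref{thm3}. I treat $I_n$ first. Location invariance lets us take $\mu=0$ and $\sigma=1$, since $M_n\mathbf 1_n=0$. Then write $\mathbf Y_n=\mathbf Z$ with $\mathbf Z\sim N_n(0,\mathbf I_n)$.

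Diagonalise $B_n=M_nA_nM_n=\sum_i\lambda_{ni}\mathbf u_i\mathbf u_i^\top$ with an orthonormal basis. The variables $W_i=\mathbf u_i^\top\mathbf Z$ are i.i.d. $N(0,1)$, and
\[
2m_nS_{AM,n}=\sum_{i=1}^n\lambda_{ni}W_i^2 .
\]
Set $X_{ni}=\lambda_{ni}W_i^2$. These are independent, with mean $\lambda_{ni}$ and variance $2\lambda_{ni}^2$, so $s_n^2=2\sum_i\lambda_{ni}^2$.

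The Lindeberg condition \eqref{condLind} follows from a Lyapunov-type bound:
\[
\frac{1}{s_n^2}\sum_i E\big[(X_{ni}-\lambda_{ni})^2\mathbf 1_{\{|X_{ni}-\lambda_{ni}|>\epsilon s_n\}}\big]\le \frac{1}{s_n^2}\sum_i\lambda_{ni}^2\,E\big[(W^2-1)^2\mathbf 1_{\{|W^2-1|>\epsilon/(\sqrt2\lambda^*_{n,\max})\}}\big].
\]
By \eqref{eq:spectral-cond} and dominated convergence, the right-hand side tends to $0$. Hence
\[
\frac{2m_nS_{AM,n}-\operatorname{tr}B_n}{\sqrt{2\operatorname{tr}B_n^2}}\to N(0,1).
\]

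Next I compute the traces. Using $\operatorname{tr}A_n=0$,
\[
\operatorname{tr}B_n=\operatorname{tr}(M_nA_n)=-\frac{\mathbf 1^\top A_n\mathbf 1}{n}=-\frac{2m_n}{n}.
\]
Expanding $\operatorname{tr}(M_nA_nM_nA_n)$ gives
\[
\operatorname{tr}B_n^2=\operatorname{tr}A_n^2-\frac{2}{n}\mathbf d_n^\top\mathbf d_n+\frac{(2m_n)^2}{n^2}.
\]
Here $\operatorname{tr}A_n^2=2m_n$ and $\mathbf d_n^\top\mathbf d_n=n(s_{nd}^2+\bar d_n^2)$. Dividing by $(2m_n)^2=n^2\bar d_n^2$ yields $E(S_{AM,n})=-1/n$ and
\[
\operatorname{Var}(S_{AM,n})=\frac{2}{n^2}\Big[\frac{n}{\bar d_n}-1-2CV_{nd}^2\Big]=\sigma^2(I_n).
\]

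It remains to pass to the ratio. Lemma~\ref{lem:denom} gives $S_{M,n}\to1$ in probability. Write
\[
\frac{I_n-\mu(I_n)}{\sigma(I_n)}=\frac{S_{AM,n}+1/n}{\sigma(I_n)S_{M,n}}+\frac{(1/n)(1-S_{M,n})}{\sigma(I_n)S_{M,n}} .
\]
The first term converges to $N(0,1)$ by Slutsky. For the second term, $1-S_{M,n}=O_p(n^{-1/2})$ and $n\sigma(I_n)$ is bounded away from $0$, so the term is $O_p(n^{-1/2})\to0$. This is exactly where that hypothesis is used.

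For $r_n$, the same scheme applies with $C_n=A_n-\mathbf d_n\mathbf d_n^\top/(2m_n)$. This matrix annihilates neither $\mathbf 1_n$ nor the constant vector direction exactly, so with $\mu\neq0$ I must check invariance. Since $C_n\mathbf 1=\mathbf d_n-\mathbf d_n=0$, the matrix $C_n$ does annihilate $\mathbf 1_n$, and the reduction to $\mu=0$ still works.

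The Lindeberg step is identical under the stated spectral condition for $C_n$. The traces are
\[
\operatorname{tr}C_n=-\frac{\mathbf d^\top\mathbf d}{2m_n}=-\bar d_n(1+CV_{nd}^2),
\]
so $E(S_{A,n})=\mu(r_n)$, and
\[
\operatorname{tr}C_n^2=2m_n-\frac{2\,\mathbf d^\top A\mathbf d}{2m_n}+\frac{(\mathbf d^\top\mathbf d)^2}{(2m_n)^2}.
\]
To match $\sigma^2(r_n)$ I must express $\mathbf d^\top A\mathbf d$ through $I_{nd}$. From the definition \eqref{eq004} applied to $\mathbf d_n$,
\[
\mathbf d^\top A\mathbf d=I_{nd}\,\frac{2m_n}{n}\,ns_{nd}^2+\frac{\bar d_n}{n}\Big(2\,\mathbf 1^\top A\mathbf d\,n\bar d_n\Big)-\cdots .
\]
This is careful bookkeeping, and I expect it to be where errors creep in. Then $r_n=S_{A,n}/S_{D,n}$. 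Lemma~\ref{lem:denom} under \eqref{eq:cv-cond} gives $S_{D,n}\to1$; note that \eqref{eq:cv-cond} should follow from the spectral condition, or else be assumed.

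The same decomposition as for $I_n$ then applies, with $\mu(r_n)(1-S_{D,n})/\sigma(r_n)\to0$. The main obstacle is this last term. The rate of $S_{D,n}-1$ is of order
\[
\frac{\sqrt{\sum d_i^2}}{2m_n}\sim\frac{\sqrt{1+CV_{nd}^2}}{\sqrt n},
\]
and $n\mu(r_n)=-(1+CV_{nd}^2)$ may grow. So I need to show that $(1+CV_{nd}^2)^{3/2}/\sqrt n\cdot\{n\sigma(r_n)\}^{-1}\to0$. I would try to derive this from the spectral condition, which forbids a single dominant degree direction, by bounding $\lambda_{\max}(C_n)$ below in terms of the degree spread.
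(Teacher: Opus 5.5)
Your argument for $I_n$ is correct and is the paper's proof. It uses the spectral representation of $S_{AM,n}$, Lindeberg via $\lambda^*_{n,\max}\to0$, the two trace computations, and the ratio decomposition with $\mu(I_n)/\sigma(I_n)=-1/\{n\sigma(I_n)\}$ bounded. One harmless slip: your second term should carry $\mu(I_n)=-1/n$, so it needs a minus sign.

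For $r_n$, the variance bookkeeping you leave open is routine. The identity you want is $\mathbf d_n^\top A_n\mathbf d_n/(2m_n)=s_{nd}^2I_{nd}+2s_{nd}^2+\bar d_n^2$, together with $(\mathbf d_n^\top\mathbf d_n)^2/(2m_n)^2=\bar d_n^2(1+CV_{nd}^2)^2$. Your partial expression has a spurious factor $\bar d_n$ in the cross term.

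Your guess that the hypothesis of Lemma~\ref{lem:denom} follows from the spectral condition is right. Put $P_n=\mathbf d_n\mathbf d_n^\top/(2m_n)$ and $C_n=A_n-P_n$. Rank-one interlacing and $\operatorname{tr}C_n^2\le 2m_n+(\operatorname{tr}P_n)^2$ give $\operatorname{tr}P_n\le\lambda_n(A_n)-\lambda_1(C_n)\le\sqrt{2m_n}+\lambda^*_{n,\max}(\sqrt{2m_n}+\operatorname{tr}P_n)$. Hence $(1+CV_{nd}^2)/n=\operatorname{tr}P_n/(2m_n)=O(m_n^{-1/2})\to0$. Here $m_n\to\infty$ is forced because $\lambda^*_{n,\max}\ge\operatorname{rank}(C_n)^{-1/2}$.

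The genuine gap is the final step for $r_n$. It cannot be closed from the stated hypotheses, so your plan of extracting the rate condition from \eqref{eq:spectral-cond} fails.

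Take a clique $K$ on $c_n$ vertices plus $n-c_n$ isolated vertices, with $c_n\to\infty$ and $n\ge c_n^{3/2}$. Let $\bar Y_K$ be the clique mean.
\begin{itemize}
\item On $K$, $C_n=c_n^{-1}\mathbf 1\mathbf 1^\top-\mathbf I_{c_n}$, and $C_n$ vanishes elsewhere. Its nonzero eigenvalues are $-1$ with multiplicity $c_n-1$, so $\lambda^*_{n,\max}=(c_n-1)^{-1/2}\to0$.
\item Also $n\sigma(r_n)=\sqrt2\,n/\{c_n(c_n-1)^{1/2}\}\ge\sqrt2$, so both hypotheses hold.
\item The numerator and denominator of $r_n$ are $-\sum_{i\in K}(Y_i-\bar Y_K)^2$ and $(c_n-1)\sum_{i\in K}(Y_i-\bar Y_K)^2$. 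So $r_n=-1/(c_n-1)$ almost surely, and $\{r_n-\mu(r_n)\}/\sigma(r_n)=-\{2(c_n-1)\}^{-1/2}\to0$.
\end{itemize}
The numerator CLT does hold here. But $|\mu(r_n)|/\sigma(r_n)=\{(c_n-1)/2\}^{1/2}$, while $1-S_{D,n}$ has exact order $c_n^{-1/2}$. So the term you flagged is of order one and cancels the Gaussian fluctuation of the numerator.

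The mechanism you suggest cannot help, because here $\operatorname{tr}P_n=\lambda_n(A_n)=c_n-1$. The degree heterogeneity is absorbed by the top eigenvector of $A_n$ and produces no large eigenvalue of $C_n$.

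The paper's proof has the same hole. It calls the $r_n$ case ``similar'', although $\mu(r_n)=-(1+CV_{nd}^2)/n$ is not $O(1/n)$.

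To finish, you need an extra assumption. One option is $\mu(r_n)/\sigma(r_n)=O(1)$, which holds for instance when $CV_{nd}$ is bounded. Another is your own condition $(1+CV_{nd}^2)^{3/2}/\{\sqrt n\,n\sigma(r_n)\}\to0$. Under either, your decomposition concludes exactly as for $I_n$.
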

{ The assumptions on $\sigma(r_n)$ and $\sigma(I_n)$ mean that these two expressions cannot go to 0 at a rate faster that $1/n$.} The two statistics $r_n$ and $I_n$ only differ in the way in which the $Y$ values are centered, either with the mean $\bar Y_n$ or with the degree weighted mean $\bar Y_d$.  They have the same limiting distribution when the network is not dense, that is when
$\bar d_n \mbox{~is~} o(n)$. 
This is stated in the next corollary whose proof is given in Section \ref{sec:cor1}.
\begin{corollary}\label{cor:sparse_regime-asym}
		If
		\[
		\frac{\bar d_n} n   \xrightarrow[n\to\infty]{} 0
		\qquad\text{,}\qquad
		\frac{d_{n,\max}}{\sqrt{n\,\bar d_n}} \xrightarrow[n\to\infty]{} 0
        \qquad\text{and}\qquad CV_{nd}\mbox{~is~} O(1),
		\]
		where $d_{n,\max}$ is the maximum entry in $\mathbf{d}_n$, then the assumptions of Theorem~\ref{thm1}, \eqref{eq:cv-cond}--\eqref{eq:spectral-cond},
are satisfied, and $r_n$ and $I_n$ share the same centered Gaussian limit:
		\[
		\sqrt{\frac{n\,\bar d_n}{2}}\,r_n
		\;\xrightarrow[n\to\infty]{\mathcal L}\;
		\mathcal{N}(0,1), \qquad
\sqrt{\frac{n\,\bar d_n}{2}}\,I_n
		\;\xrightarrow[n\to\infty]{\mathcal L}\;
		\mathcal{N}(0,1).
		\]
	\end{corollary}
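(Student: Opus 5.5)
We verify the hypotheses of Theorem~\ref{thm1} and Lemma~\ref{lem:denom} under the three stated conditions, and then simplify the normalization. The order is: the denominator condition \eqref{eq:cv-cond}, the variance lower bounds, the spectral condition \eqref{eq:spectral-cond}, and finally the limits of the centering and scaling.

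\emph{Denominator condition.} Since $CV_{nd}=O(1)$, we have $CV_{nd}^2/n\to0$, so \eqref{eq:cv-cond} holds and both normalized denominators converge to $1$ in probability.

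\emph{Variance lower bounds.} From the formula in Theorem~\ref{thm1},
\[
n^2\sigma^2(I_n)=2\bigl[n/\bar d_n-1-2CV_{nd}^2\bigr].
\]
Because $\bar d_n/n\to0$ and $CV_{nd}=O(1)$, we get $n^2\sigma^2(I_n)\sim 2n/\bar d_n\to\infty$, so $n\sigma(I_n)$ is bounded away from $0$. For $r_n$, $I_{nd}$ is a Moran statistic on a fixed vector. Its modulus is at most $(n/2m_n)\,\lambda_{\max}(A_n)\le n\,d_{n,\max}/(2m_n)=d_{n,\max}/\bar d_n$. It suffices that $CV_{nd}^2 I_{nd}=o(n/\bar d_n)$, i.e. $d_{n,\max}=o(n)$. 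This follows from the second hypothesis, since $d_{n,\max}=o(\sqrt{n\bar d_n})$ and $\sqrt{n\bar d_n}\le n$. Hence $n^2\sigma^2(r_n)\sim 2n/\bar d_n$ as well.

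\emph{Spectral condition.} For $B_n=M_nA_nM_n$ we have $\lambda_{n,\max}\le\|A_n\|_{op}\le d_{n,\max}$. For the denominator,
\[
\sum_i\lambda_{ni}^2=\operatorname{tr}(B_n^2)=\operatorname{tr}(A_nM_nA_nM_n).
\]
Expanding $M_n=\mathbf I_n-\mathbf 1_n\mathbf 1_n^\top/n$ gives
\[
\operatorname{tr}(A_n^2)-\tfrac{2}{n}\mathbf d_n^\top\mathbf d_n+\bigl(\tfrac{2m_n}{n}\bigr)^2
=n\bar d_n-2(s_{nd}^2+\bar d_n^2)+\bar d_n^2 .
\]
This equals $n\bar d_n\bigl[1-\bar d_n(1+2CV_{nd}^2)/n\bigr]\sim n\bar d_n$. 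Thus $\lambda^*_{n,\max}\lesssim d_{n,\max}/\sqrt{n\bar d_n}\to0$.

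For $C_n=A_n-\mathbf d_n\mathbf d_n^\top/(2m_n)$, the operator norm is at most $d_{n,\max}+\|\mathbf d_n\|^2/(2m_n)$. The second term equals $\bar d_n(1+CV_{nd}^2)=O(\bar d_n)\le O(d_{n,\max})$. Similarly,
\[
\operatorname{tr}(C_n^2)=n\bar d_n-2\,\mathbf d_n^\top A_n\mathbf d_n/(2m_n)+\bigl(\|\mathbf d_n\|^2/2m_n\bigr)^2 .
\]
The middle term is at most $d_{n,\max}\|\mathbf d_n\|^2/(2m_n)=O(d_{n,\max}\bar d_n)=o(n\bar d_n)$ because $d_{n,\max}=o(n)$. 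The last term is $O(\bar d_n^2)=o(n\bar d_n)$. So $\operatorname{tr}(C_n^2)\sim n\bar d_n$ and the ratio again tends to $0$.

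\emph{Centering and scaling.} Theorem~\ref{thm1} now gives $(I_n-\mu(I_n))/\sigma(I_n)\to\mathcal N(0,1)$, and likewise for $r_n$. Since $\sigma(I_n)\sim\sqrt{2/(n\bar d_n)}$, the centering term satisfies
\[
\mu(I_n)/\sigma(I_n)\asymp n^{-1}\sqrt{n\bar d_n}=\sqrt{\bar d_n/n}\to0 .
\]
The same holds for $\mu(r_n)=-(1+CV_{nd}^2)/n=O(1/n)$. Slutsky's lemma, together with $\sigma(\cdot)\sqrt{n\bar d_n/2}\to1$, then yields both displayed limits.

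The main obstacle is the $r_n$ case: controlling $I_{nd}$ and the cross term $\mathbf d_n^\top A_n\mathbf d_n$. Both reduce to the bound $d_{n,\max}=o(n)$, which comes from the second hypothesis.
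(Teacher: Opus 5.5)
Your proof is correct and follows the paper's route: check \eqref{eq:cv-cond} and the variance lower bounds, bound $\lambda_{n,\max}$ by $O(d_{n,\max})$ while showing $\sum_i\lambda_{ni}^2\sim n\bar d_n$, then use $\mu/\sigma\to0$ and Slutsky. The differences are only technical and in your favour: you use $\|M_nA_nM_n\|\le\|A_n\|$ and the triangle inequality where the paper uses Weyl's inequality with the explicit rank-one and rank-two perturbations, and you make explicit two points the paper leaves implicit, namely the bound $|I_{nd}|\le d_{n,\max}/\bar d_n$ inside $\sigma^2(r_n)$ and the lower bound on $\operatorname{tr}(B_n^2)$.
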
	
	In  Corollary \ref{cor:sparse_regime-asym}, the rate of convergence to normality is $\sqrt{n}$ when the average degree $\bar d_n$
	converges to a positive constant. This is one of the hypotheses made on page 49 of \cite{cliff_ord1981} and in \cite{sen1976large} to establish the asymptotic normality of $I_n$. Our result is more general, as it relates the convergence rate to network’s characteristics.
     \\

We now consider dense networks whose density $\bar d_n/n$ converges to a positive value $\alpha$ as $n$ goes to $\infty$.  The next corollary is concerned with regular networks in which all degrees are equal.	
	\begin{corollary}[Dense regular network]\label{cor:random_regular_dense-asym}
		Suppose that the adjacency matrix $A_n$ is selected uniformly among all possible $n \times n$ regular adjacency matrices with degree $d_n$ where  $d_n/n$ converges to $ \alpha$ as $n$ goes to $\infty$ where 
		$\alpha \in (0,\tfrac12]$. Then  there exists $n_0=n_0(\alpha)$ such
		that for all $n\ge n_0$, the random adjacency matrix $A_n$ satisfies assumption \eqref{eq:spectral-cond} of Theorem~\ref{thm1} with probability at
		least $1-1/n$. For such matrices $A_n$, the limiting distribution of $r_n=I_n$ is given by
		\[
		\frac{n\,I_n}{\sqrt{\frac{2(1-\alpha)}{\alpha}}}
		\;+\;
		\sqrt{\frac{\alpha}{2(1-\alpha)}}
		\xrightarrow[n\to\infty]{\mathcal L}
		\mathcal{N}(0,1).
		\]
	\end{corollary}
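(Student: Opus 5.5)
We apply Theorem~\ref{thm1}; the work is to verify its two assumptions for a uniformly random $d_n$-regular graph and then simplify the mean and variance. Since the network is regular, $CV_{nd}=0$, $\mathbf d_n=d_n\mathbf 1_n$ and $r_n=I_n$. Hence $\mu(I_n)=-1/n$ and
\[
n^2\sigma^2(I_n)=2\Bigl(\frac{n}{d_n}-1\Bigr)\longrightarrow \frac{2(1-\alpha)}{\alpha}>0 ,
\]
because $\alpha\le 1/2<1$. So $n\sigma(I_n)$ is bounded away from $0$. Once \eqref{eq:spectral-cond} holds, \eqref{eq:clt-rn} gives $n(I_n+1/n)/\{n\sigma(I_n)\}\to\mathcal N(0,1)$. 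Replacing $n\sigma(I_n)$ by its limit is justified by Slutsky's lemma. Expanding $(nI_n+1)/\sqrt{2(1-\alpha)/\alpha}$ then yields exactly the displayed centering $\sqrt{\alpha/(2(1-\alpha))}$. Lemma~\ref{lem:denom} needs nothing extra here, since $CV_{nd}=0$.

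The spectral condition is the core of the argument. For a regular graph, $\mathbf 1_n$ is an eigenvector of $A_n$ with eigenvalue $d_n$, so $A_n$ commutes with $M_n$. The eigenvalues of $B_n=M_nA_nM_n$ are therefore $0$ (on $\mathbf 1_n$) together with the remaining eigenvalues $\lambda_2,\dots,\lambda_n$ of $A_n$. The denominator of \eqref{eq:spectral-cond} is computed exactly:
\[
\sum_i\lambda_{ni}^2=\operatorname{tr}(A_n^2)-d_n^2=nd_n-d_n^2=d_n(n-d_n)\sim \alpha(1-\alpha)n^2 ,
\]
which is of order $n^2$. It then suffices to show $\max_{i\ge2}|\lambda_i(A_n)|=o(n)$ with probability at least $1-1/n$ for $n\ge n_0(\alpha)$.

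For this I would invoke a known spectral-gap result for uniform random regular graphs in the dense regime: for $d\le n/2$, with probability at least $1-n^{-1}$ (after adjusting constants), $\max_{i\ge 2}|\lambda_i|\le C\sqrt{d}$. This is the result of Cook, Goldstein and Johnson (via size-biased couplings and concentration), and Tikhomirov and Youssef give sharper versions. It yields $\lambda^*_{n,\max}\le C\sqrt{d_n}/\sqrt{d_n(n-d_n)}=O(n^{-1/2})\to0$. Choosing $n_0(\alpha)$ so that the probability bound and the condition $d_n/n$ near $\alpha$ both hold completes the verification.

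The main obstacle is exactly this eigenvalue bound. It cannot be derived from the paper's own tools and must be imported from random matrix theory, with care that the cited probability bound has the form $1-1/n$ uniformly in $d\le n/2$. If only a weaker bound such as $o(n)$ is available, that still suffices, since only $\lambda^*_{n,\max}\to0$ is needed.
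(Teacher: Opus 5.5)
Your proposal is correct and follows the paper's argument step for step: the same reduction of $B_n$ to $A_n-d_n\mathbf{1}_n\mathbf{1}_n^\top/n$ with eigenvalues $0,\lambda_1(A_n),\ldots,\lambda_{n-1}(A_n)$, the same exact identity $\sum_i\lambda_{ni}^2=nd_n-d_n^2$, and the same imported $O(\sqrt{d_n})$ spectral-gap bound. Your Slutsky step and your check that $n\sigma(I_n)\to\sqrt{2(1-\alpha)/\alpha}$ just make explicit what the paper leaves implicit.

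One citation correction: the Cook--Goldstein--Johnson bound only reaches $d=O(n^{2/3})$, so it does not cover $d_n\sim\alpha n$. The result you need, holding with probability at least $1-1/n$ for $n\ge n_0$, is Theorem~A of Tikhomirov and Youssef, which is exactly what the paper invokes.
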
	
	\begin{proof}
		Let $\lambda_1(\cdot)\le ...\le \lambda_n(\cdot)_n$ denote the ordered eigenvalues of a symmetric $n\times n$ matrix. If the network is regular then the largest eigenvalue of $A_n$ is $\lambda_n(A_n)=d_n$ with eigenvector $\mathbf{1}_n$. Now in Theorem~\ref{thm1} the eigenvalues of  $B_n=A_n-d_n\mathbf{1}_n\mathbf{1}_n^\top/n$ are equal to 0, $\lambda_1(A_n)$,..., $\lambda_{n-1}(A_n)$.   Thus, in condition \eqref{eq:spectral-cond}, 
        $\lambda_{n,\max}
		= \max(|\lambda_{1}(A_n)|,|\lambda_{n-1}(A_n)|)$.
       According to Theorem~A of \cite{tikhomirov2019spectral} 
       \begin{equation}\label{eq:tik}
           \lambda_{n,\max} \le L\sqrt{d_n}\approx L\sqrt{\alpha n},
       \end{equation} 
       with a large probability for $n$ large enough.
		Furthermore, $\sum_{i=1}^{n}\lambda_{ni}^{2}$ is equal to the trace of $A_n^2$, $nd_n$, minus the square of its largest eigenvalue, $d_n^2$.
		So $\sqrt{\sum_i \lambda_{ni}^{2}} \asymp n\sqrt{\alpha(1-\alpha)}$. Together with inequality \eqref{eq:tik}, this proves that \eqref{eq:spectral-cond} is verified and the result of the corollary is true with a high probability.
        	\end{proof}

In Corollary \ref{cor:random_regular_dense-asym} the rate of convergence to normality is $n$.  This is the fastest possible rate in Theorem \ref{thm1}.	
	\section{Some applications of Theorem \ref{thm1}}	
    
    This section applies Theorem \ref{thm1} to some specific networks. {We first ask, for the examples of Section~\ref{sec:example}, whether the assumptions of Theorem~\ref{thm1} are met.} {Then we illustrate the convergence to normality as $n$ becomes large and ascertain whether the hypotheses of Theorem~\ref{thm1} are met, both for simulated and for real networks.}

\subsection{The networks of Section \ref{sec:example}}	
	For the star network of Section \ref{sec:star}, Theorem \ref{thm1} does not hold because the distribution of the degrees $d_{n,i}$ is very skewed, $CV^2_{nd}$ being $O(n)$.
{
The two statistics then behave differently. Since $\bar Y_n$ and $\sum_{i=1}^n(Y_i-\bar Y_n)^2/n$ converges in probability to $\mu$ and $\sigma^2$ respectively, \eqref{eq:In-star} gives
\[
-nI_n \xrightarrow[n\to\infty]{\mathcal{L}} \frac{(Y_1-\mu)^2}{\sigma^2}\sim\chi^2_1,
\]
so $I_n$  converges, at a rate of $n$, to a non normal distribution. The limiting distribution of $r_n$ is different: dividing the numerator and the denominator of \eqref{eq:rn-star} by $n^2$ shows that
\[
r_n \xrightarrow[n\to\infty]{\mathcal{L}} -\frac{(Y_1-\mu)^2}{2\sigma^2+(Y_1-\mu)^2},
\]
a nondegenerate random limit with values in $(-1,0]$. 
} \\

    For the path network of Section \ref{sec:pat}, $\bar d_n \approx 2$. Corollary \ref{cor:sparse_regime-asym} applies and gives the classical result that the limiting distribution of $\sqrt{n}$ times the first order autocorrelation in a normal sample is $\mathcal{N}(0,1)$.\\
    
	The community network of Section \ref{sec:com} is interesting because whether Theorem \ref{thm1} applies or not depends on the size $\ell_n$ of a community. Since the network is regular, its degrees are all equal to $\ell_n-1$ and the multiplicity of $\ell_n-1$, the largest eigenvalue of $A_n$, is $k$, the number of communities.   If $\ell_n$ is $o(n)$, then Corollary~\ref{cor:sparse_regime-asym} applies and the limiting distribution is normal.  If $\ell_n$ is $O(n)$,  Corollary \ref{cor:random_regular_dense-asym} does not apply because the second largest eigenvalue of $A_n$ is $\ell_n-1$ and condition \eqref{eq:spectral-cond} fails.  The limiting distribution of $I_n$ and $r_n$ is then $\chi^2_{k-1}$. { This follows from the classical theory of quadratic forms in Gaussian random vectors (see, e.g.,
\cite[Sec.~4.4, pp.~156--163]{graybill1976theory}).}	
	\subsection{Simulated networks}\label{sec:simulated}
{To illustrate Theorem~\ref{thm1} and Corollaries~\ref{cor:sparse_regime-asym} and~\ref{cor:random_regular_dense-asym}, we consider three families of random networks that represent distinct asymptotic regimes: }
\begin{itemize}
    \item[{(i)}] An \textbf{Erdős--Rényi random network} with a probability of 0.4 of having an edge between two nodes.
    \item[{(ii)}] A  \textbf{Bounded‑degree network} {with degrees uniformly distributed between 2 and 20.}
    \item[{(iii)}] A  \textbf{Dense heterogeneous network} {with degrees uniformly distributed between 5 and $n-5$}.
\end{itemize}
{The networks in (i), (ii), and (iii) are shown in Figure \ref{fig:all_networks}.  They are simulated for $n=25, 50, 75, 100$.} To simulate a Erdős--Rényi network we used the \texttt{sample\_gnp}
function of the  package \texttt{igraph} \cite{igraph}. 
For (ii) and (iii), the simulation proceeded in two steps. First, the degrees $d_{ni}$ were selected uniformly.   
 We then verified whether the degree sequence was \emph{graphical}, that is, satisfied Erdős-Gallai inequalities \cite[ Theorem 2.2, p. 492]{blitzstein2011sequential}.
Finally we  used the algorithm of \cite{tille2025existence}, to select one network uniformly among all those with the simulated degree sequence.\\
\begin{figure}[!h]
		\centering
    \begin{subfigure}[t]{0.30\textwidth}
        \centering
        \includegraphics[width=\textwidth]{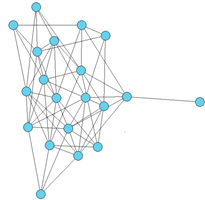}
        \caption{Erdős--Rényi network}
    \end{subfigure}
    \hfill
    \begin{subfigure}[t]{0.30\textwidth}
        \centering
        \includegraphics[width=\textwidth]{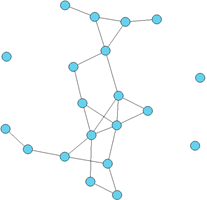}
        \caption{Bounded-degree network}
    \end{subfigure}
    \hfill
    \begin{subfigure}[t]{0.30\textwidth}
        \centering
        \includegraphics[width=\textwidth]{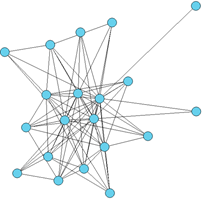}
        \caption{Dense heterogeneous network}
    \end{subfigure}
        \caption{Representation of the three simulated networks when $n=100$. For each one a sub-network obtained by randomly selecting 20 nodes is shown.}
         \label{fig:all_networks}
	\end{figure}
    \FloatBarrier
    
    {For each network and each $n$, we generated $M=10\, 000$ independent replications of $\mathbf{Y}_n$ under $H_0$ and computed, for each replication, the standardized statistics
\[
r_n^\ast=\frac{r_n-\mu(r_n)}{\sigma(r_n)}, \qquad I_n^\ast=\frac{I_n-\mu(I_n)}{\sigma(I_n)},
\]
where $\mu(\cdot)$ and $\sigma(\cdot)$ are the asymptotic means and standard deviations given by Theorem~\ref{thm1} 
The empirical rejection probability at level $\alpha$ is the proportion of the $M$ replications for which $r_n^\ast$ (resp.\ $I_n^\ast$) falls below the lower $\alpha$-quantile or above the upper $(1-\alpha)$-quantile of the standard normal distribution, for $\alpha\in\{0.01,0.05\}$; these are reported separately as lower- and upper-tail probabilities in Table~\ref{tab:erdos_probabilities}. Similar results for the bounded-degree and the heterogeneous networks are reported in (Tables~S1-S2) of the Supplementary Material. Structural indicators for all three networks are also reported in Tables~S3--S5.}\\

\begin{table}[!h] 
    \centering 
    \setlength{\tabcolsep}{10pt} 
    \caption{{Empirical rejection probabilities of the one-sided tests based on the standardized statistics $r_n^*$ and $I_n^*$ for the Erd\H{o}s--R\'enyi network ($p=0.4$). Under the limiting normal approximation, the theoretical rejection probabilities are $\alpha=0.01$ and $\alpha=0.05$ for the lower ($L$) and upper ($U$) tails, respectively.}} 
    \label{tab:erdos_probabilities} 
    \begin{tabular}{ccccccccc} 
        \hline 
        & \multicolumn{4}{c}{$r_n$} & \multicolumn{4}{c}{$I_n$} \\ \cline{2-9} & \multicolumn{2}{c}{$\alpha=0.01$} & \multicolumn{2}{c}{$\alpha=0.05$} & \multicolumn{2}{c}{$\alpha=0.01$} & \multicolumn{2}{c}{$\alpha=0.05$} \\ \cline{2-9} $n$ & $L$ & $U$ & $L$ & $U$ & $L$ & $U$ & $L$ & $U$ \\ 
        \hline 
        25  & 0.011 & 0.007 & 0.054 & 0.040 & 0.011 & 0.008 & 0.056 & 0.040 \\
        50  & 0.010 & 0.011 & 0.048 & 0.051 & 0.010 & 0.011 & 0.049 & 0.049 \\
        75  & 0.010 & 0.010 & 0.049 & 0.049 & 0.010 & 0.010 & 0.048 & 0.049 \\
        100 & 0.011 & 0.009 & 0.050 & 0.048 & 0.010 & 0.009 & 0.049 & 0.049 \\
        \hline 
    \end{tabular} 
\end{table}
\FloatBarrier

 {When $n=25$ some of the rejection probabilities in Table~\ref{tab:erdos_probabilities} differ from their limiting normal values, especially for $\alpha=0.05$. For $n\ge 50$ the conclusion of Theorem \ref{thm1} holds as the nominal and empirical rejection probabilities are equal, up to Monte Carlo errors. For all $n$,  $CV_{nd}$ is fairly small ($CV_{nd}\approx 0.15$), and $\sigma(r_n) \approx \sigma (I_n)$;  the two statistics have the same limiting normal distribution.   This network is dense ($\bar d_n/n\approx p=0.4$); Corollary~\ref{cor:sparse_regime-asym} does not apply and the ratio $\mu(I_n)/\sigma(I_n)\approx-\sqrt{p/\{2(1-p)\}} \approx-0.577$ is not negligible for large $n$.   The structural indicators of Table S3 show that the  normalized maximum eigenvalue $\lambda^*_{\text{max}}$, defined in equation \eqref{eq:spectral-cond}, converges to   0 as $n$ becomes large, for the two statistics.  The hypotheses of Theorem \ref{thm1} appear to hold for simulated Erd\H{o}s--R\'enyi networks.}\\

{Table~S1 in the Supplementary Material compares the empirical and nominal rejection probabilities for the bounded degree network. There is a close agreement between the two sets of probabilities for $n=25,\ 50,\ 75,\ 100$; Theorem \ref{thm1} appears to hold for small values of $n$.  In Table~S4 the ratios  $\mu(I_n)/\sigma(I_n)$ and $\mu(r_n)/\sigma(r_n)$ are not negligible. A sample size of $n=100$ is not large enough for Corollary~\ref{cor:sparse_regime-asym} to apply to these bounded degree networks.} \\

{For the dense heterogeneous network, Table~S2 shows important differences, especially for $r_n$, between the empirical and the nominal rejection probabilities, even at $n=100$.  The normalized maximum eigenvalues $\lambda^*_{\text{max}}$ reported in Table~S5 do not appear to converge to 0 as $n$ increases. Theorem \ref{thm1} does not apply to these networks. The normal Q--Q plots reported in the supplementary material (Figures~S1--S3) are consistent with these results.}

\subsection{Real networks}
\label{sec:real}

\subsubsection{Chesapeake and Karate networks}
{We now turn to two real networks, presented in Figure \ref{fig:real_networks}. They are used as benchmarks in the network literature, to assess how well the asymptotic normality established in Theorem~\ref{thm1} performs outside a simulated setting.  They are }
\begin{itemize}
    \item[(i)] The \textbf{Chesapeake Bay food web} (\cite{baird1989seasonal}), consisting of $n=33$ organisms and predation links. 
    \item[(ii)] The \textbf{Zachary's Karate Club social network} (\cite{zachary1977information}), which comprises $n=34$ individuals.
\end{itemize}

\begin{figure}[!h]
    \centering
    \begin{subfigure}[t]{0.4\textwidth}
        \centering
        \includegraphics[width=\textwidth]{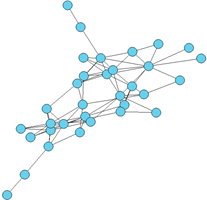}
        \caption{Chesapeake network}
    \end{subfigure}
    \hfill
    \begin{subfigure}[t]{0.4\textwidth}
        \centering
        \includegraphics[width=\textwidth]{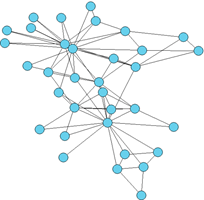}
        \caption{Karate network}
    \end{subfigure}
    \caption{Representation of the Chesapeake and Karate networks.}
    \label{fig:real_networks}
\end{figure}
\FloatBarrier

{Table~\ref{tab:empirical_quantiles} reports the empirical rejection probabilities of the one-sided tests based on $M=10\,000$ replications of the standardized statistics $r_n^*$ and $I_n^*$ for both networks. 
At the $5\%$ level the probabilities are close to the nominal level for Chesapeake  and mildly conservative for Karate. At the $1\%$ level the departures between empirical and nominal probabilities are important. This could be expected considered the $n=33$ and $n=34$ of these two networks.}

\begin{table}[!h]
\centering
\setlength{\tabcolsep}{10pt}
\caption{{Empirical rejection probabilities of the one-sided tests based on the standardized statistics $r^*_n$ and $I_n^*$ for the Chesapeake and Karate networks. Under the standard normal approximation, the theoretical rejection probabilities are $\alpha=0.01$ and $\alpha=0.05$ for the lower ($L$) and upper ($U$) tails, respectively.}}
\label{tab:empirical_quantiles}
\begin{tabular}{lcccccccc}
\hline
 & \multicolumn{4}{c}{$r_n$} & \multicolumn{4}{c}{$I_n$} \\
\cline{2-9}
 & \multicolumn{2}{c}{$\alpha=0.01$} & \multicolumn{2}{c}{$\alpha=0.05$} & \multicolumn{2}{c}{$\alpha=0.01$} & \multicolumn{2}{c}{$\alpha=0.05$} \\
\cline{2-9}
Network & $L$ & $U$ & $L$ & $U$ & $L$ & $U$ & $L$ & $U$ \\
\hline
Chesapeake & 0.012 & 0.013 & 0.051 & 0.054 & 0.014 & 0.013 & 0.051 & 0.052 \\
Karate & 0.008 & 0.012 & 0.042 & 0.047 & 0.009 & 0.014 & 0.047 & 0.049\\
\hline
\end{tabular}
\end{table}

\FloatBarrier
{The structural indicators of Table~S6 suggests that the conclusions of Theorem \ref{thm1} do not hold for these two networks as their  normalized maximum eigenvalues $\lambda^*_{\text{max}}$ are not negligible. This could be anticipated for the Karate network since, considering Figure \ref{fig:real_networks},  this network resembles the Community network of Section \ref{sec:com} with  $k=2$,  for which the conclusion of Theorem \ref{thm1} fails.}

\subsubsection{Insurance Company networks}

This section considers confidential data about clients of an insurance company. Three client networks are available together with a continuous dependent variable $Y$ measuring the involvement of a client with the company. Each network has more than 10\,000 clients. {These networks are sparse as their average degrees given in Table~S7 satisfy $\bar d_n\in[1.08,1.30]$.  Corollary~\ref{cor:sparse_regime-asym} applies:  $I_n$ and $r_n$ are, for all practical purposes, equal and normal test statistics are obtained by multiplying $r_n$ and $I_n$ by $\sqrt{n \bar d_n/2}$}.  The results are reported in Table~\ref{tab:insurance_results}:  The association with $\mathbf{Y}_n$ is slightly larger for Networks 2 and 3. Indeed, the association with Network 2 is significant at the 5\% level, because this network is larger than the third one.
Since $\bar d_n=1.22$ for Network 2, it consists mostly of disjoint dyads.  Its structure resembles that of the Community network of Section~\ref{sec:com} with $\ell_n=2$. Thus, $I_n= r_n\approx1\%$ means that this approximate dyad structure explains about 1\% of the variability in $Y$.

\begin{table}[!h]
\centering
\setlength{\tabcolsep}{6pt}
\caption{Test results for the company networks.}
\label{tab:insurance_results}
\begin{tabular}{ccccc}
\hline
Network & $r_n$ & $I_n$ & \textit{p-value}$(r_n)$ & \textit{p-value}$(I_n)$ \\
\hline
1 & 0.0069 & 0.0069 & 0.1586 & 0.1583 \\
2 & 0.0101 & 0.0102 & 0.0253 & 0.0249 \\
3 & 0.0116 & 0.0116 & 0.1675 & 0.1682 \\
\hline
\end{tabular}
\end{table}

    \section{The proofs of Theorem \ref{thm1} and Corollary \ref{cor:sparse_regime-asym}}
\label{sec:proof}
Without loss of generality we can assume that the entries of  $\mathbf{Y}_n$  are independent $N(0,1)$ variables since $r_n$ and $I_n$  are invariant with respect to a rescaling of the variables.

\subsection{Evaluation of the first two moments of $S_{A,n}$ and $S_{AM,n}$ 
defined in \eqref{eq:ratio}}
The first step in the proof is to calculate the expectation and the variance of quadratic forms in normal variables.  To do so we use standard results  \cite{ghazal1996}  that states that if $\mathbf{Y}_n \sim N_n(0,\mathbf{I}_n)$ then for any symmetric matrix $A$, $\mathbb{E}(\mathbf{Y}_n^\top A \mathbf{Y}_n)=\mbox{tr}(A)$ while $\mathrm{Var}(\mathbf{Y}_n^\top A \mathbf{Y}_n)=2\mbox{tr}(A^2)$. The first two moments of the numerators in expressions \eqref{eq:ratio}, obtained with these formulas, are given in the next lemma.
	\begin{lemma}\label{lem:moments-asym}
		Let $\mu(r_n),\ \mu(I_n), \ \sigma^2(r_n)$ and $\sigma^2(r_n)$ be the asymptotic moments defined in Theorem \ref{thm1} and $\mathbf{Y}_{n}$ be a random vector with a $N_n(0,\mathbf{I}_n)$ distribution, then 
		\begin{eqnarray*}
			\mathbb{E}[ S_{A,n}] & = & \mu(r_n)=-\frac 1 {n}\,\bigl(CV_{nd}^{2}+1\bigr),   \\
            \mathbb{E}[ S_{AM,n}] & = & \mu(I_n)=-\frac{1}{n},\\  
            \mathrm{Var}( S_{A,n})& = & \sigma^2(r_n)= \frac{2}{n^{2}}
			\left[
			\frac{n}{\bar d_{n}}+ CV_{nd}^{4}-1-2CV_{nd}^{2}I_{nd}-2CV_{nd}^{2}
			\right],\\
			  \mathrm{Var}( S_{AM,n}) & = & \sigma^2(I_n)=\frac{2}{n^{2}}
			\left[
			\frac{n}{\bar d_{n}}-1-2CV_{nd}^{2}
			\right],
		\end{eqnarray*}
	\end{lemma}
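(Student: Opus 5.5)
We apply the two formulas quoted just before the statement: for $\mathbf{Y}_n\sim N_n(0,\mathbf{I}_n)$ and symmetric $C$, $\mathbb{E}(\mathbf{Y}_n^\top C\mathbf{Y}_n)=\mathrm{tr}(C)$ and $\mathrm{Var}(\mathbf{Y}_n^\top C\mathbf{Y}_n)=2\,\mathrm{tr}(C^2)$. We use them with $C_r=(2m_n)^{-1}\{A_n-\mathbf{d}_n\mathbf{d}_n^\top/(2m_n)\}$ and $C_I=(2m_n)^{-1}M_nA_nM_n$. Everything then reduces to computing traces in terms of $n$, $m_n$, $\bar d_n$, $CV_{nd}$ and $I_{nd}$. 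The basic identities we need are
\[
\mathrm{tr}(A_n)=0,\qquad
\mathrm{tr}(A_n^2)=\textstyle\sum_i d_{ni}=2m_n,\qquad
\mathbf{1}_n^\top A_n\mathbf{1}_n=2m_n,
\]
\[
\|\mathbf{d}_n\|^2=\textstyle\sum_i d_{ni}^2=n(s_{nd}^2+\bar d_n^2)=n\bar d_n^2(CV_{nd}^2+1).
\]
We also need $\mathbf{d}_n^\top A_n\mathbf{d}_n$. By the definition of Moran's $I$ applied to $\mathbf{d}_n$, and since $M_n\mathbf{d}_n=\mathbf{d}_n-\bar d_n\mathbf{1}_n$, we get $(M_n\mathbf{d}_n)^\top A_n(M_n\mathbf{d}_n)=2m_n I_{nd}\,s_{nd}^2$. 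Expanding the left side gives
\[
\mathbf{d}_n^\top A_n\mathbf{d}_n=2m_n I_{nd}s_{nd}^2+2\bar d_n\|\mathbf{d}_n\|^2-\bar d_n^2\,2m_n .
\]

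\textbf{Means.} For $r_n$,
\[
\mathrm{tr}(C_r)=-\frac{\|\mathbf{d}_n\|^2}{(2m_n)^2}=-\frac{n\bar d_n^2(CV^2_{nd}+1)}{n^2\bar d_n^2}=-\frac{CV_{nd}^2+1}{n}.
\]
For $I_n$, since $M_n$ is idempotent,
\[
\mathrm{tr}(M_nA_nM_n)=\mathrm{tr}(A_nM_n)=0-\frac{\mathbf{1}_n^\top A_n\mathbf{1}_n}{n}=-\bar d_n,
\]
so $\mathrm{tr}(C_I)=-\bar d_n/(n\bar d_n)=-1/n$.

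\textbf{Variance of $S_{AM,n}$.} First $\mathrm{tr}((M_nA_nM_n)^2)=\mathrm{tr}(A_nM_nA_nM_n)$. Writing $M_n=\mathbf{I}_n-J$ with $J=\mathbf{1}_n\mathbf{1}_n^\top/n$ and expanding,
\[
\mathrm{tr}(A_n^2)-2\,\mathrm{tr}(A_n^2J)+\mathrm{tr}(A_nJA_nJ)=2m_n-\frac{2\|\mathbf{d}_n\|^2}{n}+\bar d_n^2 .
\]
Multiplying by $2/(2m_n)^2=2/(n\bar d_n)^2$ and substituting $\|\mathbf{d}_n\|^2$ gives
\[
\frac{2}{n^2}\Bigl[\frac{n}{\bar d_n}-2(CV_{nd}^2+1)+1\Bigr],
\]
which is the stated $\sigma^2(I_n)$.

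\textbf{Variance of $S_{A,n}$.} Set $P=\mathbf{d}_n\mathbf{d}_n^\top/(2m_n)$. Then
\[
\mathrm{tr}((A_n-P)^2)=\mathrm{tr}(A_n^2)-\frac{2\,\mathbf{d}_n^\top A_n\mathbf{d}_n}{2m_n}+\frac{\|\mathbf{d}_n\|^4}{(2m_n)^2}.
\]
We substitute the expression for $\mathbf{d}_n^\top A_n\mathbf{d}_n$ derived above and the identity $s_{nd}^2=\bar d_n^2CV_{nd}^2$. Dividing by $(2m_n)^2/2$, the three terms become:
\begin{itemize}
\item the first term gives $\dfrac{2}{n^2}\cdot\dfrac{n}{\bar d_n}$;
\item the last term gives $\dfrac{2}{n^2}(CV_{nd}^2+1)^2$;
\item the middle term gives $-\dfrac{2}{n^2}\bigl[2CV_{nd}^2I_{nd}+4(CV_{nd}^2+1)-2\bigr]$.
\end{itemize}
Collecting terms,
\[
\frac{2}{n^2}\Bigl[\frac{n}{\bar d_n}+CV_{nd}^4+2CV_{nd}^2+1-2CV_{nd}^2I_{nd}-4CV_{nd}^2-2\Bigr],
\]
which simplifies to the stated $\sigma^2(r_n)$.

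The only delicate step is this last one: expressing $\mathbf{d}_n^\top A_n\mathbf{d}_n$ through $I_{nd}$ with the correct normalisation of Moran's $I$, in particular the denominator $\sum_i(d_{ni}-\bar d_n)^2=ns_{nd}^2$ and the factor $n/(2m_n)$, and then tracking the constants. All other steps are routine trace algebra.
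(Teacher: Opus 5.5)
Your proposal is correct and follows essentially the same route as the paper: apply $\mathbb{E}(\mathbf{Y}_n^\top C\mathbf{Y}_n)=\mathrm{tr}(C)$ and $\mathrm{Var}(\mathbf{Y}_n^\top C\mathbf{Y}_n)=2\,\mathrm{tr}(C^2)$, then reduce the traces using $\mathrm{tr}(A_n^2)=2m_n$, $\|\mathbf{d}_n\|^2=n\bar d_n^2(CV_{nd}^2+1)$ and the expression of $\mathbf{d}_n^\top A_n\mathbf{d}_n$ through $I_{nd}$. After dividing by $2m_n$, that expression is exactly the paper's identity $\mathbf{d}_n^\top A_n\mathbf{d}_n/(2m_n)=s_{nd}^2I_{nd}+2s_{nd}^2+\bar d_n^2$, and your constant-tracking checks out in all four formulas; your computation of $\mathrm{tr}\{(M_nA_nM_n)^2\}$ also keeps the factor $2/(2m_n)^2$ that the paper's displayed chain drops in one step.
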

\begin{proof}
For an adjacency matrix $A_n$, we have 
$\operatorname{tr}(A_n)=0$ and  $ \operatorname{tr}(A_n^2)=2m_n=\sum d_{ni}=n\bar d_n$. 
Thus one has
	\begin{eqnarray*}
			\operatorname{E}\{ S_{A,n} \}
			& = &\frac{1}{2m_{n}} \operatorname{tr}
			\!\left[ \left( A_{n} - \frac{1}{2m_{n}}\mathbf{d}_{n}\mathbf{d}_{n}^{\top}\right)
			\mathbf{I}_{n} \right]  \\
			& = & -\frac{1}{4m_n^2} \sum_{i=1}^{n} d_i^2 = -\frac{1}{n}(CV_{nd}^{2}+1).\\
			\operatorname{var}\{ S_{A,n} \}
			& = & \frac{2}{4m_n^{2}}
			\operatorname{tr}\!\left[\left( A_{n} - \frac{1}{2m_{n}}\mathbf{d}_{n}\mathbf{d}_{n}^{\top} \right)^{2}\right] \\
			& = &\frac{2}{4m_{n}^{2}}\left[2m_{n}  - \frac{2}{2m_{n}}\mathbf{d}_{n}^{\top}A_{n}\mathbf{d}_{n}+\frac{\left(\sum d_i^2\right)^2}{4m_n^2}\right]\\
            & = & \frac{ 2}{n^{2}} \left[ \frac{n}{\bar d_{n}}+
			CV_{nd}^{4} - 2CV_{nd}^{2}I_{nd}- 1 -2CV_{nd}^{2}  \right]. 
       	\end{eqnarray*}   
The last expression is obtained with the following formula for Moran's $I$ calculated with  the degree vector
$$
I_{nd}= \frac{\mathbf{d}_{n}^{\top}A_{n}\mathbf{d}_{n}}{s_{nd}^{2}\  2m_{n}} -2 -\frac{\bar d_n^{2}}{s_{nd}^{2}},
$$
so that $\mathbf{d}_{n}^{\top}A_{n}\mathbf{d}_{n}/(2m_n)=s_{nd}^2I_{nd} +2s_{nd}^2 +\bar d_n^2$,  and the fact that $ \left(\sum d_i^2\right)^2/(4m_n^2)=\bar d_n^2(CV_{nd}^4+2CV_{nd}^2+1)$.
For the numerator of Moran's $I$ statistic one has
        \begin{eqnarray*}
			\operatorname{E}\{ S_{AM,n} \}
			& = & \frac{1}{2m_{n}} \operatorname{tr}
			\!\left[ M_nA_{n}M_n \right]=\frac{1}{2m_{n}} \operatorname{tr}
			\!\left[ A_{n}M_n \right]=  -\frac{1}{n}\\
			\operatorname{var}\{ S_{AM,n} \}
			& = & \frac{2}{4m_n^{2}}
			\operatorname{tr}\!\left[\left(M_n A_{n} M_{n} \right)^{2}\right]=\operatorname{tr}\!\left[\left(M_n A_{n}  \right)^{2}\right] \\
			& = & \frac{2}{4m_{n}^{2}}\left[2m_{n}-\frac{2}{n} \overunderset{n}{i=1}{\sum}d_{ni}^{2} +\frac{4m_n^2}{n^2}\right]\\
			& = &\frac{ 2}{n^{2}} \left[ \frac{n}{\bar d_{n}}-2CV_{nd}^{2} - 1 \right].
	\end{eqnarray*}
\end{proof}
\subsection{Proof of Theorem \ref{thm1}} 

We first show that the Lindeberg condition \eqref{condLind} is verified for the numerators, $S_{A,n}$ and $S_{AM,n}$, of $r_n$ and $I_n$. Both are quadratic forms in normal variables constructed with different matrices $B_n$.  These quadratic forms are distributed as a linear combination of $\chi_1^2$ random variables,
        $$
        S_{a,n} \stackrel{D}{=} \frac{1}{2m_{n}}\sum_{i=1}^{n}\lambda_{ni}Z_{i}^{2},
        $$
		where $\lambda_{n1},\ldots,\lambda_{nn}$ are the eigenvalues of the matrix $B_n$, and the $Z_i$'s are independent $\mathcal{N}(0,1)$ random variables.  We want to apply Theorem \ref{thm3} to the triangular array of the variables $X_{n,i}=\lambda_{ni}Z_i^2/(2m_n)$.  
		We have
		\[
		\sum_{i=1}^{n}\mu_{ni} =\sum_{i=1}^{n}\operatorname{E}(X_{n,i})
		= \frac{1}{2m_{n}}\sum_{i=1}^{n}\lambda_{ni}
		= \frac{1}{2m_{n}}\operatorname{tr}(B_n),
		\]
		while
		\[
		s_{n}^{2}= \sum_{i=1}^{n} \mathrm{Var}(X_{n,i})
		= \frac{2}{4m_{n}^{2}}\sum_{i=1}^{n}\lambda_{ni}^{2}
        = \frac{2}{4m_{n}^{2}}\operatorname{tr}(B_n^2).
		\]
    To verify Lindeberg's condition, we need to show that \eqref{condLind} is satisfied.
		For any $\epsilon>0$, consider
		\[
		L_{n}(\epsilon)
		= \frac{1}{s_{n}^{2}}\sum_{i=1}^{n}
		E\!\left\{(X_{ni}-\mu_{ni})^{2}
		\mathbf{1}_{\{|X_{ni}-\mu_{ni}|>\epsilon s_{n}\}}\right\}.
		\]
		The expectations in the sum can be expressed as integrals with respect to the $\chi_1^2$ density:
		\[
		L_{n}(\epsilon)= \sum_{i=1}^n\frac{\lambda_{ni}^{2}}{s_n^2  4m_{n}^{2}}
		\int_0^{\infty} (t-1)^{2}
		\frac{t^{-1/2}}{\sqrt{2\pi}} e^{-t/2}
		\mathbf{1}_{\left\{
			|t-1|>\frac{2m_{n}s_{n}}{|\lambda_{ni}|}\epsilon
			\right\}} 
		dt.
		\] 
		Bounding $|\lambda_{ni}|$ by $\lambda_{n\max}$ in the integrals, we obtain
		\[
		L_{n}(\epsilon)
		\le
		\int (t-1)^{2}
		\frac{t^{-1/2}}{\sqrt{2\pi}} e^{-t/2}
		\mathbf{1}_{\left\{
			|t-1|>\frac{2m_{n}s_{n}}{\lambda_{n\max}}\epsilon
			\right\}} dt.
		\]
		Under assumption \eqref{eq:spectral-cond},
		$2m_{n}s_{n}/\lambda_{n\max}$ tends to $ +\infty$
        and $L_{n}(\epsilon)$ converges to 0. Thus, if condition \eqref{eq:spectral-cond} is verified for the
        matrix $B_n$ of a quadratic form, { 
        then Theorem \ref{thm3} yields 
        \[
        \frac{S_{AM,n}-\mu(I_n)}{\sigma(I_n)}
        \xrightarrow[n\to\infty]{\mathcal{L}}
        \mathcal N(0,1) \text{~~and~~}
         \frac{S_{A,n}-\mu(r_n)}{\sigma(r_n)}
        \xrightarrow[n\to\infty]{\mathcal{L}}
        \mathcal N(0,1).
        \]
        To complete the derivation of the asymptotic distribution of $I_n$ observe that
        \[
        \frac{I_n-\mu(I_n)}{\sigma(I_n)}
        =
        \frac{S_{AM,n}-\mu(I_n)}{\sigma(I_n)}
        \frac{1}{S_{M,n}}
        +
        \frac{\mu(I_n)}{\sigma(I_n)}
        \left(\frac{1}{S_{M,n}}-1\right) .
        \]
        Since $\mu(I_n)$ is $O(1/n)$ and $n\sigma(I_n)$ is assumed to be bounded away from 0, $S_{AM,n}$ converges to 1 in probability and the second term of the right hand side of the above expression converges to 0 in probability while the first term is asymptotically equivalent to $\left\{S_{AM,n}-\mu(I_n)\right\}/\sigma(I_n)$.  This completes the proof of the asymptotic normality of $I_n$. 
        The proof for $r_n$ is similar, since $n\sigma(r_n)$  bounded away from 0 for large $n$ implies that  Lemma \ref{lem:denom} applies and that $S_{AM,n}$ converges to 1 in probability. 
        } 
	\subsection{Proof of Corollary \ref{cor:sparse_regime-asym}}\label{sec:cor1}
When $CV_{nd}$ is bounded and $\bar d_n \mbox{~is~} o(n)$,  both $\sigma^2(I_n)$ and $\sigma^2(I_n)$ are equal to $2/(n\bar d_n)$ up to negligible terms and the ratios $\mu(r_n)/\sigma(r_n)$ and $\mu(I_n)/\sigma(I_n)$ converge to 0.  In addition, the sum of the squared eigenvalues of the matrices $B_n$ for the two statistics satisfy $\operatorname{tr}(B_n^2)/(2m_n)$ is $O(1)$, where $2m_n=n\bar d_n=\operatorname{tr}(A_n^2)$.\\

To establish that \eqref{eq:spectral-cond} holds we need to show that, for the two matrices $B_n$, $\lambda_{n,\max}/(2m_n)$ tends to 0 as $n$ goes to $\infty$. We first prove that this is true for the adjacency matrix $A_n$. Let $\lambda_1(\cdot)\le ...\le \lambda_n(\cdot)_n$ denote the ordered eigenvalue of a matrix.  Since the Laplacian $D_n-A_n$ and the matrix $D_n+A_n$ are semi definite positive \cite[Section 6.14.4]{newman2018networks}, one has 
\begin{equation} \label{eq:lmax}
\max\{-\lambda_1(A_n),\lambda_n(A_n)\} \le d_{n,\max}
\end{equation}
and condition \eqref{eq:spectral-cond} is verified for $A_n$. 
To prove that it is verified for the $B_n$ matrices for $r_n$ and $I_n$, we use the 
 following result on page 269 of \cite{golub2013matrix},
	\begin{corollary}\label{cor:8.1-3}
		If $A$ and $A+E$ are two symmetric matrices of size $n\times n$, then for any
		$k = 1,2,\ldots,n$,
		\begin{equation}\label{eq:Wilk-bound-fr}
			\lambda_k(A) + \lambda_1(E)
			\;\le\; \lambda_k(A+E)
			\;\le\; \lambda_k(A) + \lambda_n(E).
		\end{equation}
	\end{corollary}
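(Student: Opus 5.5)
The plan is to derive \eqref{eq:Wilk-bound-fr} from the Courant--Fischer min--max characterization of the eigenvalues of a symmetric matrix, combined with the elementary Rayleigh-quotient bounds for the perturbation $E$. With eigenvalues ordered increasingly, $\lambda_1(\cdot)\le\cdots\le\lambda_n(\cdot)$ as in the statement, I would use Courant--Fischer in the form
\[
\lambda_k(M)=\min_{\dim S=k}\ \max_{x\in S,\ \|x\|=1} x^\top M x ,
\]
valid for any symmetric $n\times n$ matrix $M$. Here the minimum runs over all $k$-dimensional subspaces $S\subseteq\mathbb R^n$. I take this characterization as standard; it follows from the spectral decomposition of $M$ and a dimension-counting argument, namely that any $k$-dimensional subspace meets the span of the top $n-k+1$ eigenvectors nontrivially.

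First, I would record that for every unit vector $x$ the spectral theorem applied to $E$ gives
\[
\lambda_1(E)\le x^\top E x\le \lambda_n(E).
\]
To see this, expand $x$ in an orthonormal eigenbasis of $E$; then $x^\top E x$ is a convex combination of the eigenvalues of $E$.

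Second, fix a $k$-dimensional subspace $S$. For each unit $x\in S$ we have
\[
x^\top A x+\lambda_1(E)\le x^\top(A+E)x\le x^\top A x+\lambda_n(E).
\]
Taking the maximum over unit $x\in S$ preserves these inequalities, because the additive constants do not depend on $x$. Taking next the minimum over all $k$-dimensional $S$ again preserves them, for the same reason. Applying Courant--Fischer to both $A$ and $A+E$ then yields exactly
\[
\lambda_k(A)+\lambda_1(E)\le\lambda_k(A+E)\le\lambda_k(A)+\lambda_n(E).
\]

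There is no real obstacle once Courant--Fischer is available. The only point needing care is the bookkeeping of the ordering convention: the eigenvalues here are increasing, so the min--max formula must be the version with $\min$ over $k$-dimensional subspaces and $\max$ inside. If one preferred to avoid invoking min--max for both bounds, the lower bound could alternatively be obtained from the upper one applied to $A+E$ and $-E$. Indeed, $A=(A+E)+(-E)$ and $\lambda_n(-E)=-\lambda_1(E)$, so the upper bound gives $\lambda_k(A)\le\lambda_k(A+E)-\lambda_1(E)$, which is the lower bound.
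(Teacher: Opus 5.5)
Your argument is correct. The paper gives no proof of its own and simply quotes this Weyl inequality from Golub and Van Loan, where it is obtained as a corollary of the Courant--Fischer minimax theorem in essentially the way you do it: bound $x^\top E x$ between $\lambda_1(E)$ and $\lambda_n(E)$, then pass through the min--max, using the version with the $\min$ over $k$-dimensional subspaces outside to match the increasing ordering.
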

First, we apply Corollary \ref{cor:8.1-3} to the matrix $B_n$ for $r_n$.  
We write $B_n = A_n + E_n$, where  
\[
E_n = - \frac{\mathbf{d}_n \mathbf{d}_n^{\top}}{2m_n},
\]
and $E_n$ has a single non-null eigenvalue given by  
\[
\lambda_1(E_n)
  = -\frac{\sum_i d_{ni}^2}{2m_n}
  = - \bar d_n \left( CV_{nd}^2 + 1 \right).
\]

Thus, using the bound in \eqref{eq:lmax}, we obtain
\[
\lambda_{n,\max}
  \le \max \Big\{ -\lambda_1(A_n) + \bar d_n (CV_{nd}^2 +1),
                  \lambda_n(A_n) \Big\}
  \le d_{n,\max} + \bar d_n (CV_{nd}^2 + 1).
\]
This implies that $B_n$ satisfies \eqref{eq:spectral-cond} 
as $\bar d_n (CV_{nd}^2 +1)/\sqrt{n \bar d_n}$ converges to 0
as $n$ goes to $\infty$.\\
Now we consider the matrix $B_n=M_nA_nM_n$ for $I_n$.  It has 
\begin{eqnarray*}
    E_n & = &-\frac 1 n \mathbf{1}_n \mathbf{d}_n^\top -
\frac 1 n  \mathbf{d}_n\mathbf{1}_n ^\top +
\frac 1 n  \mathbf{1}_n\mathbf{1}_n ^\top\\
& = & \left[\frac{\mathbf{1}_n}{\sqrt{n}};\frac{\mathbf{d}_n-\bar d_n \mathbf{1}_n}{\sqrt{ns_d^2}} \right]
\begin{pmatrix}
    1-2\bar d_n & -\sqrt{s_d^2} \\-\sqrt{s_d^2} & 0 
\end{pmatrix}
\left[\frac{\mathbf{1}_n}{\sqrt{n}};\frac{\mathbf{d}_n-\bar d_n \mathbf{1}_n}{\sqrt{ns_d^2}} \right]^\top.
\end{eqnarray*}
This matrix has two non-null eigenvalues equal to the those of the middle $2 \times 2$ matrix in the expression above.  They are equal to $(1-2\bar d_n \pm \sqrt{(1-2\bar d_n)^2+4s_d^2})/2$ and they converge to 0 when divided by $\sqrt{ n \bar d_n}$.  Thus the proof that $\lambda_{n,\max}$, for statistic $I_n$, fulfills condition \eqref{eq:spectral-cond} is similar to that for $r_n$.

\section*{Conclusion}
	
	In this article, we conducted a detailed investigation of the asymptotic behavior of two network-based dependence statistics: the assortativity coefficient $r_n$ and Moran’s coefficient $I_n$. By adopting a unified framework based on quadratic forms, we established simple yet general conditions that guarantee their asymptotic normality under the assumptions
	of normality and independence of the attribute values.
    The results of the paper can be extended in several directions.  First, Theorem \ref{thm1} can be applied to an arbitrary symmetric proximity matrix $A_n$ with non negative entries.  Second, the asymptotic distribution of Moran's $I$ calculated on regression residuals can be deduced from Theorem \ref{thm1}.  The numerator of Moran's $I$ is then proportional to  $\mathbf{Y}_n^\top M_n^*A_nM_n^*\mathbf{Y}_n $ where $M_n^*$ is an idempotent matrix defined in such a way that $M_n^*\mathbf{Y}_n$ is the vector of regression residuals. To derive the asymptotic distribution of $I_n$, one applies Theorem \ref{thm1} to a quadratic form defined by the matrix $B_n=M_n^*A_nM_n^*$ with $\mu(I_n)=\operatorname{tr}(B_n)/(2m_n)$ and 
    $\sigma^2(I_n)=2\operatorname{tr}(B_n^2)/(4m_n^2)$.\\
    
    {Third, the Gaussian assumption on $Y$ can be relaxed.  Using the argument in  Sen, \cite{sen1976large}, we can generalize  Corollary \ref{cor:sparse_regime-asym} to random variables $Y$ satisfying $E(Y^4)<\infty$, provided that the degrees are uniformly bounded by a fixed constant (these are ``finitely contiguous'' weights, in Sen's terminology). \cite{kelejian_prucha2001} further extend Sen's argument to independent, possibly heteroskedastic and non-normal innovations, and to the generalized Cliff–Ord specification. Extending it to networks satisfying only the weaker sparsity condition of Corollary \ref{cor:sparse_regime-asym} where the degree may grow with $n$, remains open and would require adapting Sen's martingale argument to our spectral condition (\ref{eq:spectral-cond}).}\\

    
    {Fourth, our results are stated conditionally on an observed adjacency matrix. Corollary \ref{cor:random_regular_dense-asym} is a first step beyond this setting, since it verifies condition \eqref{eq:spectral-cond} with probability at least $1-1/n$ when $A_n$ is drawn uniformly among regular graphs of degree $d_n\approx\alpha n$. Establishing analogous results for the $\beta$-model, the $p_0$ model or covariate-adjusted network models would be a natural continuation.
    }
    
    \section*{Acknowledgments}
We also acknowledge the industrial partner Prompt as well as NSERC for their financial support of the research project within which this article was written.\\

\bibliographystyle{comnet}
\bibliography{ref_jcn}

\newpage

\renewcommand{\thetable}{S\arabic{table}}
\renewcommand{\thefigure}{S\arabic{figure}}

\centerline{\bf Supplementary materials}

\setcounter{section}{0}
\setcounter{table}{0}
\setcounter{figure}{0}

\medskip

\noindent 
This supplementary document complements the main manuscript. References to theorems, corollaries and equations refer to the numbering used in the main article.

\section{Comparison of the empirical and theoretical probabilities}

This section reports the empirical rejection probabilities not already shown in the main text. Those for the Erd\H{o}s--R\'enyi, Chesapeake and Karate networks appear respectively in Tables~1 and 2 of the main article; the insurance company networks are not included here, as individual-level attribute data are unavailable for computing standardized statistics.

\begin{table}[!h] 
\centering 
\setlength{\tabcolsep}{10pt} 
\caption{Empirical rejection probabilities of the one-sided tests based on the standardized statistics for bounded-degree networks. Under the standard normal approximation, the theoretical rejection probabilities are $\alpha=0.01$ and $\alpha=0.05$ for the lower ($L$) and upper ($U$) tails, respectively.} 
\label{tab:bounded_probabilities} 
\begin{tabular}{ccccccccc} 
\hline 
& \multicolumn{4}{c}{$r_n$} & \multicolumn{4}{c}{$I_n$} \\ \cline{2-9}
& \multicolumn{2}{c}{$\alpha=0.01$} & \multicolumn{2}{c}{$\alpha=0.05$} & \multicolumn{2}{c}{$\alpha=0.01$} & \multicolumn{2}{c}{$\alpha=0.05$} \\ \cline{2-9}
$n$ & $L$ & $U$ & $L$ & $U$ & $L$ & $U$ & $L$ & $U$ \\ 
\hline 
25  & 0.010 & 0.010 & 0.046 & 0.046 & 0.010 & 0.008 & 0.051 & 0.045 \\
50  & 0.011 & 0.010 & 0.052 & 0.048 & 0.011 & 0.009 & 0.054 & 0.047 \\
75  & 0.011 & 0.010 & 0.051 & 0.053 & 0.011 & 0.010 & 0.051 & 0.052 \\
100 & 0.009 & 0.010 & 0.053 & 0.050 & 0.010 & 0.010 & 0.053 & 0.049 \\
\hline 
\end{tabular} 
\end{table}
\begin{table}[!h]
    \centering 
    \setlength{\tabcolsep}{10pt} 
    \caption{Empirical rejection probabilities of the one-sided tests based on the standardized statistics for dense heterogeneous networks. Under the standard normal approximation, the theoretical rejection probabilities are $\alpha=0.01$ and $\alpha=0.05$ for the lower ($L$) and upper ($U$) tails, respectively.}
    \label{tab:dense_probabilities} 
    \begin{tabular}{ccccccccc} 
        \hline & \multicolumn{4}{c}{$r_n$} & \multicolumn{4}{c}{$I_n$} \\ 
        \cline{2-9} & \multicolumn{2}{c}{$\alpha=0.01$} & \multicolumn{2}{c}{$\alpha=0.05$} & \multicolumn{2}{c}{$\alpha=0.01$} & \multicolumn{2}{c}{$\alpha=0.05$} \\
        \cline{2-9} $n$ & $L$ & $U$ & $L$ & $U$ & $L$ & $U$ & $L$ & $U$ \\
        \hline
        25  & 0.009 & 0.010 & 0.047 & 0.045 & 0.008 & 0.007 & 0.048 & 0.044 \\
        50  & 0.012 & 0.008 & 0.057 & 0.042 & 0.010 & 0.009 & 0.051 & 0.045 \\ 
        75  & 0.015 & 0.006 & 0.052 & 0.038 & 0.013 & 0.008 & 0.050 & 0.046 \\ 
        100 & 0.015 & 0.006 & 0.049 & 0.040 & 0.009 & 0.010 & 0.044 & 0.046 \\
        \hline 
    \end{tabular} 
\end{table}
\FloatBarrier
\section{Structural indicators}

For each network, we computed the following structural indicators:
\[
\bar d_n,
\qquad
CV_{nd}, \qquad
I_{nd},
\qquad
\lambda_{\max}^*=\frac{\lambda_{n\max}}{\sqrt{\sum_{i=1}^{n}\lambda_{ni}^{2}}},
\qquad
\alpha_{\max}^*=\frac{\alpha_{n\max}}{\sqrt{\sum_{i=1}^{n}\alpha_{ni}^{2}}},
\]
where $(\lambda_{ni})$ and $(\alpha_{ni})$ denote, respectively, the eigenvalues of the matrices
\[
A_n - \frac{1}{2m_n}A_n\mathbf 1_n \mathbf 1_n^\top A_n,
\qquad
M_n A_n M_n,
\]
which appear in the definitions of the two statistics. For the simulated networks, we generated $10\,000$ independent realizations of the random vector $\mathbf{Y}_n$, and for each realization computed the standardized test statistics
\[
r_n^\ast=\frac{r_{n}-\mu(r_{n})}{\sigma(r_{n})},
\qquad
I_n^\ast=\frac{I_{n}-\mu(I_{n})}{\sigma(I_{n})}.
\]

\begin{table}[!h] 
\centering  
\caption{Structural indicators for the Erd\H{o}s--R\'enyi networks ($p=0.4$).} 
\label{tab:erdos_indicators} 
\begin{tabular}{ccccccc} 
\hline
$n$ & $\bar d_n$ & $CV_{nd}$ & $I_{nd}$ & $\lambda_{\max}^{*}$ & $\alpha_{\max}^{*}$ & $\dfrac{\mu(r_n)}{\sigma(r_n)}$ / $\dfrac{\mu(I_n)}{\sigma(I_n)}$\\ 
\hline 
25  & 10.880 & 0.157 & -0.016 & 0.412 & 0.412 & -0.648 / -0.633 \\            
50  & 21.040 & 0.158 & -0.033 & 0.271 & 0.271 & -0.629 / -0.614 \\
75  & 29.733 & 0.149 & -0.007 & 0.254 & 0.254 & -0.594 / -0.582 \\ 
100 & 39.360 & 0.124 & -0.013 & 0.198 & 0.198 & -0.584 / -0.575 \\
\hline 
\end{tabular} 
\end{table}

\begin{table}[!h] 
\centering
\caption{Structural indicators for the bounded-degree networks.} 
\label{tab:bounded_indicators} 
\begin{tabular}{cccccccc} 
\hline
$n$ & $\bar d_n$ & $CV_{nd}$ & $I_{nd}$ & $\lambda_{\max}^{*}$ & $\alpha_{\max}^{*}$ & $\dfrac{\mu(r_n)}{\sigma(r_n)}$ & $\dfrac{\mu(I_n)}{\sigma(I_n)}$\\ 
\hline 
25  & 10.400 & 0.462 & -0.020 & 0.392 & 0.401 & -0.844 & -0.715 \\            
50  & 11.080 & 0.503 & 0.167  & 0.293 & 0.292 & -0.513 & -0.408 \\
75  & 9.520  & 0.552 & 0.193  & 0.254 & 0.253 & -0.369 & -0.282 \\ 
100 & 10.820 & 0.485 & 0.156  & 0.215 & 0.215 & -0.314 & -0.254 \\
\hline 
\end{tabular} 
\end{table}

\begin{table}[!h] 
\centering  
\caption{Structural indicators for the dense heterogeneous networks.} 
\label{tab:dense_indicators} 
\begin{tabular}{cccccccc} 
\hline $n$ & $\bar d_n$ & $CV_{nd}$ & $I_{nd}$ & $\lambda_{\max}^{*}$ & $\alpha_{\max}^{*}$ & $\dfrac{\mu(r_n)}{\sigma(r_n)}$ & $\dfrac{\mu(I_n)}{\sigma(I_n)}$\\ 
\hline 
25  & 12.400 & 0.412 & -0.022 & 0.428 & 0.411 & -0.979 & -0.859 \\
50  & 26.000 & 0.442 & -0.043 & 0.406 & 0.287 & -1.103 & -0.970 \\ 
75  & 37.333 & 0.510 & -0.031 & 0.519 & 0.338 & -1.178 & -1.012 \\
100 & 49.460 & 0.519 & -0.023 & 0.507 & 0.300 & -1.191 & -1.017 \\
\hline 
\end{tabular} 
\end{table}

\begin{table}[!h]
\centering
\setlength{\tabcolsep}{11pt}
\caption{Structural indicators for the Chesapeake and Karate networks.}
\label{tab:realnet_structural}
\begin{tabular}{lcccccccc}
\hline
Network & $n$ & $\bar d_n$ & $CV_{nd}$ & $I_{nd}$ & $\lambda_{\max}^*$ & $\alpha_{\max}^*$ & $\dfrac{\mu(r_n)}{\sigma(r_n)}$ & $\dfrac{\mu(I_n)}{\sigma(I_n)}$ \\
\hline
Chesapeake & 33 & 4.303 & 0.568 & 0.14 & 0.44 & 0.44 & -0.38 & -0.29 \\
Karate & 34 & 4.59 & 0.833 & -0.24 & 0.50 & 0.48 & -0.5 & -0.32 \\
\hline
\end{tabular}
\end{table}

\begin{table}[!h]
\centering
\caption{Structural indicators for the company networks.}
\label{tab:insurance_structural}
\begin{tabular}{ccc}
\hline
Network & $\bar d_n$ & $CV_{nd}$ \\
\hline
1 & 1.08 & 0.274 \\
2 & 1.22 & 0.453 \\
3 & 1.30 & 0.503 \\
\hline
\end{tabular}
\end{table}

\section{Q--Q plots}

Figures~\ref{fig:all_qqplot_ER}--\ref{fig:all_qqplot_real} display the Q--Q plots of the standardized statistics against the standard normal distribution, for $n=25,50,75,100$ (simulated networks) or for the observed network (Chesapeake and Karate). The sample quantiles lie close to the reference line over the central range in all cases, with departures confined to the tails; these are most visible for $r_n$ in the dense heterogeneous network.

\begin{figure}[!h]
\centering
\begin{subfigure}[t]{0.7\textwidth}
    \centering
    \includegraphics[width=\textwidth]{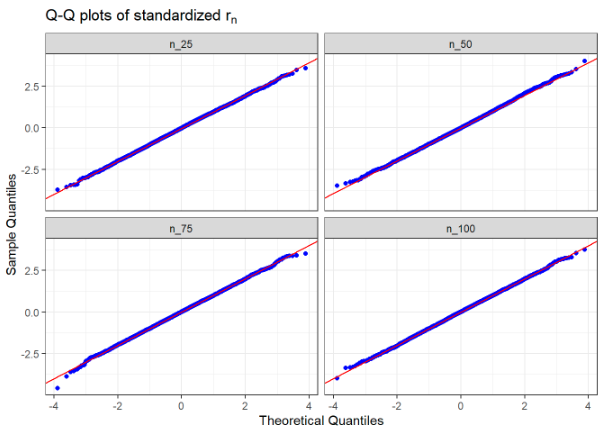}
\end{subfigure}
\hfill
\begin{subfigure}[t]{0.7\textwidth}
    \centering
    \includegraphics[width=\textwidth]{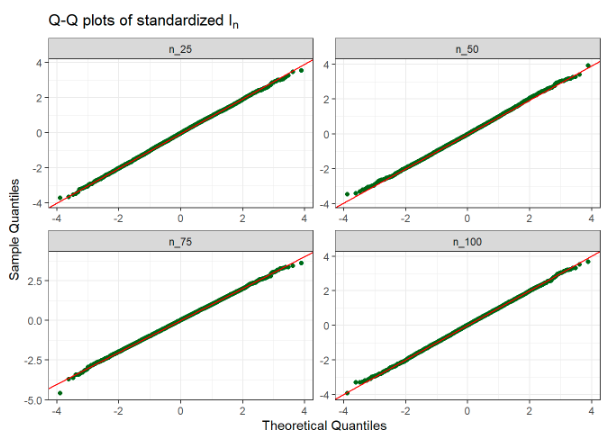}
\end{subfigure}
\caption{Normal Q--Q plots of the standardized statistics $r_n^\ast$ and $I_n^\ast$ for Erd\H{o}s--R\'enyi networks of sizes $n=25$, $50$, $75$, and $100$. The observed quantiles are compared with the theoretical quantiles of the standard normal distribution. The red line denotes the normal reference line.}
\label{fig:all_qqplot_ER}
\end{figure}

\begin{figure}[!h]
\centering
\begin{subfigure}[t]{0.7\textwidth}
    \centering
    \includegraphics[width=\textwidth]{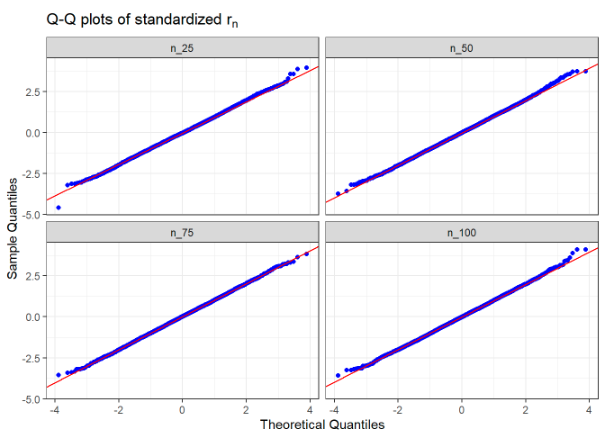}
\end{subfigure}
\hfill
\begin{subfigure}[t]{0.7\textwidth}
    \centering
    \includegraphics[width=\textwidth]{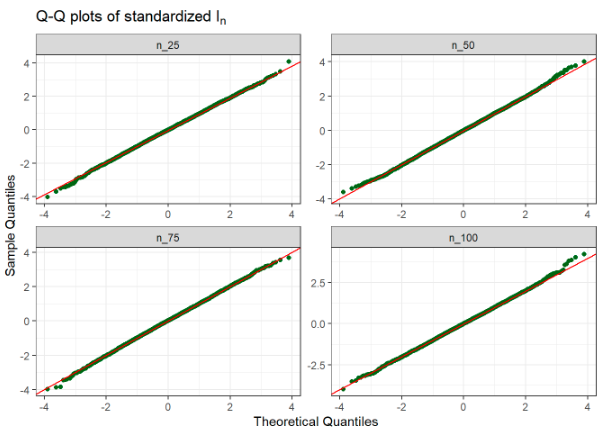}
\end{subfigure}
\caption{Normal Q--Q plots of the standardized statistics $r_n^\ast$ and $I_n^\ast$ for bounded-degree networks of sizes $n=25$, $50$, $75$, and $100$. The observed quantiles are compared with the theoretical quantiles of the standard normal distribution. The red line denotes the normal reference line.}
\label{fig:all_qqplot_bounded}
\end{figure}

\begin{figure}[!h]
\centering
\begin{subfigure}[t]{0.7\textwidth}
    \centering
    \includegraphics[width=\textwidth]{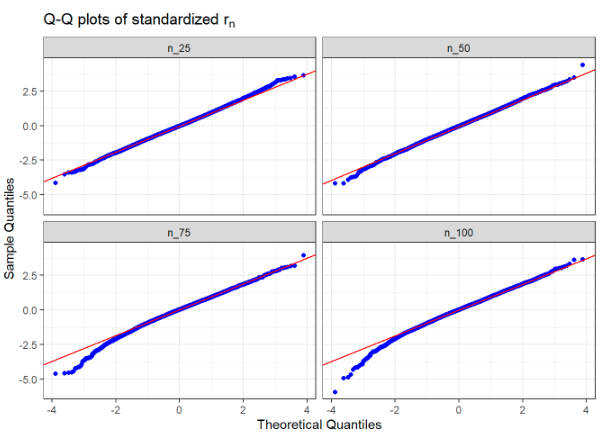}
\end{subfigure}
\hfill
\begin{subfigure}[t]{0.7\textwidth}
    \centering
    \includegraphics[width=\textwidth]{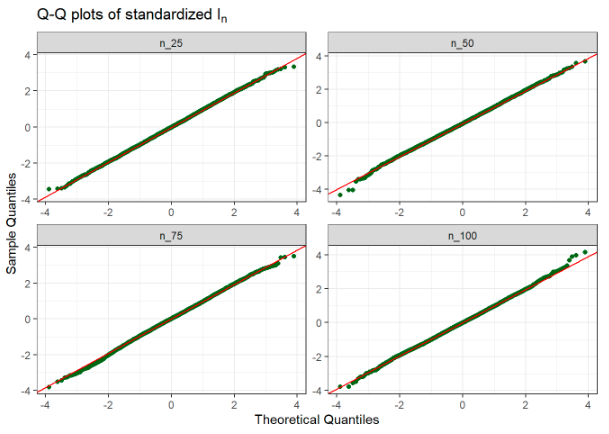}
\end{subfigure}
\caption{Normal Q--Q plots of the standardized statistics $r_n^\ast$ and $I_n^\ast$ for dense heterogeneous networks of sizes $n=25$, $50$, $75$, and $100$. The observed quantiles are compared with the theoretical quantiles of the standard normal distribution. The red line denotes the normal reference line.}
\label{fig:all_qqplot_dense}
\end{figure}

\begin{figure}[!h]
\centering
\begin{subfigure}[t]{0.7\textwidth}
    \centering
    \includegraphics[width=\textwidth]{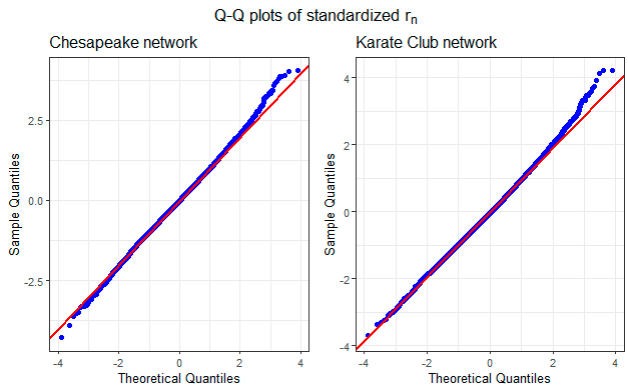}
\end{subfigure}
\hfill
\begin{subfigure}[t]{0.7\textwidth}
    \centering
    \includegraphics[width=\textwidth]{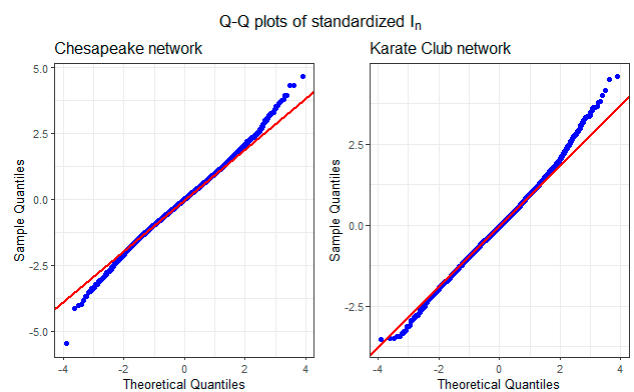}
\end{subfigure}
\caption{Normal Q--Q plots of the standardized statistics $r_n^\ast$ and $I_n^\ast$ for the Chesapeake and Karate networks. The observed quantiles are compared with the theoretical quantiles of the standard normal distribution. The red line denotes the normal reference line.}
\label{fig:all_qqplot_real}
\end{figure}

\end{document}